\let\oldtocsection=\tocsection
\let\oldtocsubsection=\tocsubsection
\let\oldtocsubsubsection=\tocsubsubsection
\renewcommand{\tocsection}[2]{\hspace{0em}\textbf{\oldtocsection{#1}{#2}}}
\renewcommand{\tocsubsection}[2]{\hspace{1.8em}\oldtocsubsection{#1}{#2}}
\renewcommand{\tocsubsubsection}[2]{\hspace{3em}\oldtocsubsubsection{#1}{#2}}
\numberwithin{equation}{section}
\theoremstyle{plain}
\newtheorem{theorem}{Theorem}[section]
\newtheorem{lemma}[theorem]{Lemma}
\newtheorem{proposition}[theorem]{Proposition}
\theoremstyle{definition}
\newtheorem*{ack}{Acknowledgements}
\newtheorem {definition}[theorem]{Definition}
\newtheorem {example}[theorem]{Example}
\newtheorem*{notation*}{Notation}
\theoremstyle{remark}
\newcommand{\R}{\mathbb{R}}
\newcommand{\N}{\mathbb{N}}
\newcommand{\Z}{\mathbb{Z}}
\renewcommand{\epsilon}{\varepsilon}
\renewcommand{\rho}{\varrho}
\renewcommand{\phi}{\varphi}
\DeclareMathOperator{\Vol}{Vol}
\newcommand{\F}{\mathcal{F}}
\DeclareMathOperator{\poly}{poly}
\DeclareMathOperator{\bigO}{\mathcal{O}}
\DeclareMathOperator{\cl}{cl}
\DeclareMathOperator{\interior}{int}
\DeclareMathOperator{\Mat}{Mat}
\DeclareMathOperator{\Vspan}{span}
\def\XXint#1#2#3{{\setbox0=\hbox{$#1{#2#3}{\int}$}
		\vcenter{\hbox{$#2#3$}}\kern-.5\wd0}}
\title[]{Sharp o-minimality and lattice point counting}
\author{Harrison-Migochi, Andrew \& McCulloch, Raymond}
\address{\noindent Department of Mathematics, University of Manchester,
	Manchester M13 9PL, UK.}
\email{\noindent andrew.harrison-migochi@manchester.ac.uk}
\address{\noindent Department of Mathematics, University of Manchester,
	Manchester M13 9PL, UK\\\newline \indent 
	Heilbronn Institute for Mathematical Research, Bristol, UK.}
\email{\noindent raymond.mcculloch@manchester.ac.uk}
\keywords{Model theory, Pila-Wilkie theorem, sharp o-minimality}
\subjclass[2020]{33E05, 03C64, 11F03} 
\thanks{\noindent ORCID: 0000-0003-4701-5720, ORCID: 0000-0002-0570-4977\\}
\begin{document}
	
	\begin{abstract}
		Let $\Lambda\subseteq\R^n$ be a lattice and let $Z\subseteq\R^{m+n}$ be a definable family in an o-minimal expansion of the real field, $\overline{\R}$. A result of Barroero and Widmer gives sharp estimates for the number of lattice points in the fibers $Z_T=\{x\in\R^n:(T,x)\in Z\}$. Here we give an effective version of this result for a family definable in a sharply o-minimal structure expanding $\overline{\R}$. We also give an effective version of the Barroero and Widmer statement for certain sets definable in $\R_{\exp}$.
	\end{abstract}
	
	\maketitle
	
	\section{Introduction}
	
	In \cite{pila-wilkie} Pila and Wilkie introduced their celebrated point counting theorem on the rational points in a set definable in an o-minimal expansion of the ordered real field. This has led to numerous applications such as various point counting problems. The classical proof of the Pila-Wilkie theorem is non-effective and therefore recent work by Jones and Thomas \cite{effective_pila_wilkie_pfaffian} and Binyamini \cite{density_of_alg_points_noetherian} \cite{point_cointing_for_foliations} as well as Binyamini, Jones, Schmidt and Thomas in \cite{effective_pila_wilkie_subpfaffian} has gone into obtaining an effective proof of the Pila-Wilkie theorem in various cases. 
	
	\noindent In \cite{wilkies_conjecture}, Binyamini, Novikov and Zak introduce the concept of sharp o-minimality (or \#o-minimality) in order to prove Wilkie's Conjecture on $\R_{\exp}$. Sharp o-minimal structures assign to every definable set or definable function two integers, format and degree, which generalise the ideas dimension and degree of semi-algebraic sets. In a \#o-minimal structure, given a finite set of definable sets, $\mathcal{Y}$, the formats and degrees of the sets can then be used to find effective bounds on the format and degree of sets obtained via the application of boolean operations and coordinate projection to definable sets in $\mathcal{Y}$. In particular, effective bounds can be obtained for the number of cells as well as the format and degree of the cells in a cell decomposition compatible with $\mathcal{Y}$. Binyamini, Novikov and Zak \cite{wilkies_conjecture} show that $\R_{\text{rPfaff}}$, the restricted sub-Pfaffian structure over the reals, is \#o-minimal then use this to derive Wilkie's conjecture for $\R_{\exp}$. In their subsequent paper, \cite{sharp_o_minimality}, Binyamini, Novikov and Zak further refine the concept of \#o-minimality. We apply \#o-minimality, as defined in \cite{sharp_o_minimality}, to obtain an effective version of the following theorem by Barroero and Widmer.
	
	\begin{theorem}[\cite{lattice_counting} Theorem 1.3]\label{thm:BW_lattice_counting}
		Let $Z \subseteq \R^{m+n}$ be a definable family in some o-minimal expansion of the real field and suppose that the fibres, $Z_T$, for $T \in \R^m$, are bounded. Then there exists a constant $c_Z \in \R$, depending only on the family, $Z$, such that for all lattices, $\Lambda \subseteq \R^n$
		$$
		\left|\left|Z_T \cap \Lambda\right| - \frac{\Vol(Z_T)}{\det(\Lambda)}\right| \leq c_Z \sum^{n-1}_{j=0} \frac{V_j(Z_T)}{\lambda_1 \ldots \lambda_j},
		$$
		where $V_j(Z)$ is the sum of volumes of the $j$-dimensional orthogonal projections of $Z$ onto the coordinate spaces obtained by setting $n-j$ coordinates to zero and $V_0(Z)$ is taken to be $1$ and $\lambda_1, \ldots, \lambda_n$ are the successive minima of $\Lambda$ with respect to a zero centred unit ball.
	\end{theorem}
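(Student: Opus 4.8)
The plan is to combine a classical geometry-of-numbers discrepancy estimate with the o-minimal fact that the boundaries of a definable family are uniformly parametrizable; boundedness of the fibres $Z_T$ is used here to ensure $\partial Z_T$ is compact and the $V_j(Z_T)$ are finite. The analytic input I would isolate first is a Davenport--Widmer-type lemma: if $S\subseteq\R^n$ is bounded and $\partial S$ is covered by the images of $N$ maps $\psi_i\colon[0,1]^{n-1}\to\R^n$, each Lipschitz with (possibly anisotropic) partial Lipschitz constants, then $\bigl||S\cap\Lambda|-\Vol(S)/\det(\Lambda)\bigr|$ is at most $c(n)$ times the sum over $i$ and over $0\le j\le n-1$ of the product of the $j$ largest partial Lipschitz constants of $\psi_i$, divided by $\lambda_1\cdots\lambda_j$ (the empty product for $j=0$ matching the convention $V_0=1$). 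Its proof is the usual induction on $n$: fix a reduced basis $b_1,\dots,b_n$ of $\Lambda$, slice $\R^n$ along cosets of $\Z b_1$, count on each line by comparing the number of hits of $S$ with its length, and bound the number of lines that meet $\partial S$ by the $(n-1)$-volume of the projection of $\partial S$ along $b_1$ over the covolume of the projected lattice; this produces the denominators $\lambda_1\cdots\lambda_j$ and, via the parametrizations, the surface-area numerators. I would take this lemma as known, as it is due to Widmer.

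For the o-minimal input, cell decomposition applied to $Z$ and to a decomposition compatible with its frontier yields a constant $N=N(Z)$, depending only on the family, bounding uniformly in $T$ the number of cells describing $\overline{Z_T}$, hence the number of pieces of $\partial Z_T$; after a further $C^1$ (or Lipschitz) cell refinement, itself uniform in $T$, each boundary piece is the image of a $C^1$ map on an open box of dimension $\le n-1$. To bring such a map into the shape required by the counting lemma I would, piece by piece, conjugate by a diagonal (coordinate-axis) linear rescaling of $\R^n$ chosen so that the rescaled piece lies in the unit cube: the rescaled map then has partial Lipschitz constants bounded by an absolute constant, with the implied constant coming only from the parametrization theorem, and undoing the rescaling shows that the $j$-dimensional ``size'' of the piece that feeds the counting lemma is at most, up to a constant depending only on $n$ and $N$, the volume of a $j$-dimensional coordinate projection of $Z_T$ --- that is, it is absorbed into $V_j(Z_T)$.

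Finally I would apply the counting lemma to $S=Z_T$ with this parametrization, sum the contributions of the $\le N$ boundary pieces for each $j$, and read off the asserted bound with $c_Z$ assembled from $n$, $N$ and the constant of the counting lemma; the $j=0$ term accounts for the uniformly bounded number of connected components, again consistent with $V_0=1$. The step I expect to be the main obstacle is the rescaling above: one must make the cost of parametrizing $\partial Z_T$ surface as the scaling-covariant quantity $V_j(Z_T)$ rather than the cruder $\diam(Z_T)^j$, which forces the choice of an anisotropic rescaling adapted, piece by piece, to how that boundary piece actually sits inside $Z_T$, followed by a verification that the partial-Lipschitz data transforms exactly as the $V_j$ do under such rescalings --- precisely the point where the uniform-in-$T$ o-minimal parametrization must be married to the linear algebra of successive minima.
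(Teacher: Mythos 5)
Your route is genuinely different from the one the paper follows (namely Barroero and Widmer's, summarised in Section \ref{sec: sharp BW}). You want to feed a Lipschitz parametrization of $\partial Z_T$ into a Widmer-type counting principle, whereas the paper's argument uses Davenport's lemma, which requires only a uniform bound on the number of intervals in the intersection of $\Psi(Z_T)$ (and of its coordinate projections) with lines parallel to the coordinate axes. That bound comes for free, uniformly in $T$ and in the endomorphism $\Psi$, from uniform finiteness and cell decomposition applied to the definable family $\{(\Psi,T,x): x\in\Psi(Z_T)\}$; no parametrization of the boundary is needed. The passage from $\Z^n$ to $\Lambda$ is handled by a $\Psi$ sending a reduced basis of $\Lambda$ to the standard basis, and the remaining work --- bounding $V_j(\Psi(Z_T))$ by a constant times $V_j(Z_T)/(\lambda_1\cdots\lambda_j)$ --- is done by a separate Hausdorff-measure and volume-comparison argument (the analogues here are Lemmas \ref{lem:endomorphism_image_bound}, \ref{lem:orthogonal_projection_bound}, \ref{lem:auxilliary_set} and Proposition \ref{prop:bound_V_prime}), not by rescaling a parametrization. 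Your reduction to compact fibres via closure and boundary, and the use of cell decomposition to bound the number of boundary pieces uniformly in $T$, do match the paper's steps (1), (2) and (4).

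The gap is exactly the step you flag as the main obstacle, and it is not a technicality that a diagonal rescaling resolves. Rescaling each coordinate so that a boundary piece sits in the unit cube, applying the family version of the $C^1$ parametrization theorem, and undoing the rescaling yields maps whose output coordinates are Lipschitz with constants comparable to the coordinate extents $R_1,\dots,R_n$ of $Z_T$; the $j$-th term of the counting lemma is then controlled only by products $\prod_{i\in I}R_i$, $|I|=j$, and not by $V_j(Z_T)=\sum_{|I|=j}\Vol_j(\pi_I(Z_T))$. These can differ by an arbitrarily large factor: for a sliver of width $\varepsilon$ along the diagonal of $[0,R]^2\times\{0\}^{n-2}$ with $n\geq 3$ one has $R_1R_2=R^2$ while $V_2$ is of order $R\varepsilon$. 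The failure is caused by how the set sits relative to the axes, not by its coordinate extents, so no axis-aligned rescaling can see it; the parametrization theorem has no reason to align its charts with the long directions of the piece. A rescaling tilted to the piece (say via a John ellipsoid) would adapt the charts, but then translating the resulting Lipschitz data back into the coordinate-projection volumes $V_j$ is precisely the nontrivial content of Barroero and Widmer's Lemmas 2.2 and 2.4 and their auxiliary family $Z'$, i.e.\ the machinery your sketch omits. As written, your argument yields the theorem with $V_j(Z_T)$ replaced by products of coordinate extents (or by $\diam(Z_T)^j$), which is strictly weaker than the stated bound.
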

	
	\noindent See \cite{geometry_of_numbers} for the definition of successive minima (Chapter VIII) and general lattice theory. Although we do not require the effective Pila-Wilkie, the framework of \#o-minimality provides the necessary tools to prove effective versions of Theorem \ref{thm:BW_lattice_counting}. Theorem \ref{thm:BW_lattice_counting} extends a classical result by Davenport on counting the number of integer points on certain compact subsets of $\R^n$ to a counting theorem for lattice points in families of sets definable in o-minimal expansions of the real field. Barroero and Widmer's counting theorem has been used to obtain asymptotic bounds for the number of algebraic integers of fixed degree and bounded height \cite{counting_algebraic_integers}. In this case, the problem reduces to counting lattice points on a definable family of semi-algebraic sets. Other applications such as \cite{manins_conjecture} and \cite{large_families_of_elliptic_curves_by_conductor} apply point counting to existentially definable sets definable in $\R_{\exp}$. Using our version of Theorem \ref{thm:BW_lattice_counting} in \#o-minimal structures we obtain an effective version for existentially definable sets in $\R_{\exp}$.

	\subsection{Main Results}
	Throughout this paper we use the following notation to denote effectively computable constants:
	\begin{notation*}
		Let $m_1, \ldots, m_k, n \in \N$. Then 
		\begin{itemize}
			\item $\bigO_n(1)$ denotes a constant, $c = c(n)$, such that for some fixed function, $\alpha: \N \to \N$, we have $c \leq \alpha(n)$.
			\item $\poly_n(m_1, \ldots, m_k)$ denotes a constant, $c = c(n, m_1, \ldots, m_k)$, such that \\ $c \leq P_n(m_1, \ldots, m_k)$ for some polynomial $P_n(x_1, \ldots, x_m)$ effectively computable from $n$.
		\end{itemize}
		Different occurrences of $\bigO_n(1)$ and $\poly_n(m_1,\ldots, m_k)$ need not refer to the same function $\alpha$ or polynomial $P_n(x_1, \ldots, x_m)$.
	\end{notation*}
	
	\noindent Our main result is the following effective version of Theorem \ref{thm:BW_lattice_counting} in \#o-minimal structures:
	\begin{theorem}\label{thm:sharp_lattice_count}
		Let $m, n \in \N$ and $Z \subseteq \R^{m + n}$ be a definable family in some \#o-minimal expansion of the real field, such that $Z$ has format $\mathcal{F}$ and degree $D$ and each fibre, $Z_T \in \R^n$, is bounded. Then there exists some constant $c = \poly_\mathcal{F}(D)$ such that for all lattices $\Lambda \subseteq \R^n$,
		$$
		\left| \left|Z_T \cap \Lambda\right| - \frac{\Vol(Z_T)}{\det(\Lambda)} \right| \leq c \sum^{n-1}_{j=0} \frac{V_j(Z_T)}{\lambda_1 \ldots \lambda_j},
		$$
		where $V_j(Z)$ is the sum of volumes of the $j$-dimensional orthogonal projections of $Z$ onto the coordinate spaces obtained by setting $n-j$ coordinates to zero and $V_0(Z)$ is taken to be $1$ and $\lambda_1, \ldots, \lambda_n$ are the successive minima of $\Lambda$ with respect to a zero centred unit ball.
	\end{theorem}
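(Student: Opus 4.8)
The plan is to follow the proof of Theorem~\ref{thm:BW_lattice_counting} given by Barroero and Widmer in \cite{lattice_counting} essentially verbatim, the point being that its only non-effective ingredient is a uniform bound, extracted from o-minimal cell decomposition, on the number of connected components of certain one-dimensional slices of $Z_T$; this bound can be replaced by an explicit $\poly_\mathcal{F}(D)$ estimate coming from sharp o-minimality. Recall that the argument of \cite{lattice_counting} reduces the desired inequality to a Davenport--Widmer-type lattice point count of the following shape: if $S\subseteq\R^n$ is bounded, $\Lambda\subseteq\R^n$ is a lattice with successive minima $\lambda_1\le\cdots\le\lambda_n$, and $M$ is an integer bounding the number of connected components of the intersection with any coordinate-parallel line of $S$, and of each orthogonal projection of $S$ onto a coordinate subspace, then
$$\left|\,\left|S\cap\Lambda\right|-\frac{\Vol(S)}{\det\Lambda}\,\right|\ \le\ c(M,n)\sum_{j=0}^{n-1}\frac{V_j(S)}{\lambda_1\cdots\lambda_j},$$
with $c(M,n)$ completely explicit. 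Applied with $S=Z_T$, everything comes down to bounding $M$ and $c(M,n)$ effectively in terms of $\mathcal{F}$ and $D$.

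First I observe that in a sharply o-minimal structure the dimension of the ambient space of a definable set is controlled by its format, so $m+n=\bigO_\mathcal{F}(1)$; in particular $n=\bigO_\mathcal{F}(1)$, whence the dependence on $n$ in $c(M,n)$ --- and the geometry-of-numbers constants in the transfer from $\Z^n$ to a general lattice via a basis adapted to the successive minima --- contribute only a factor $\bigO_\mathcal{F}(1)$, which is absorbed into $\poly_\mathcal{F}(D)$. It therefore suffices to prove $M=\poly_\mathcal{F}(D)$. After a permutation of coordinates the one-dimensional slices in question are the sets
$$ L\;=\;\{\,t\in\R : \exists\, x''\in\R^{n-j}\ \ (T,c_1,\dots,c_{j-1},t,x'')\in Z\,\}\subseteq\R, $$
where $0\le j<n$, the $j$ retained coordinates range over all choices, and $c_1,\dots,c_{j-1}\in\R$. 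Each $L$ is obtained from $Z$ by substituting fixed real values for coordinates (for $T$ and for $c_1,\dots,c_{j-1}$) and by applying at most $n$ coordinate projections; by the complexity estimates for sharply o-minimal structures in \cite{sharp_o_minimality} such substitution does not increase the format or degree, while each coordinate projection raises the format by $\bigO(1)$ and the degree polynomially, so $L$ has format $\bigO_\mathcal{F}(1)$ and degree $\poly_\mathcal{F}(D)$, uniformly in all the choices. Since a definable subset of $\R$ of format $\mathcal{F}'$ and degree $D'$ in a sharply o-minimal structure has at most $\poly_{\mathcal{F}'}(D')$ connected components \cite{sharp_o_minimality}, and the number of relevant choices of retained coordinates is $\bigO_\mathcal{F}(1)$, we conclude $M=\poly_\mathcal{F}(D)$, and hence the constant is $c=c(M,n)=\poly_\mathcal{F}(D)$.

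The main obstacle I anticipate is the bookkeeping in the second step rather than any essentially new idea. One must check that the auxiliary sets $L$ really do arise as fibres of finitely many definable families that do not themselves depend on the real slicing parameters $c_i$, so that the format/degree calculus of \cite{sharp_o_minimality} may be applied uniformly; and one must be careful that passing from $Z$ to the fibre $Z_T$, and iterating the projection estimates a bounded number of times, does not inflate the degree beyond $\poly_\mathcal{F}(D)$. Once $M=\poly_\mathcal{F}(D)$ is established, the remainder of the proof --- the Davenport-type count for $\Z^n$ and its transfer to an arbitrary lattice $\Lambda$ using the successive minima --- is identical to \cite{lattice_counting} and introduces no constant worse than $\poly_\mathcal{F}(D)$.
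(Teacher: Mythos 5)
There is a genuine gap. You have correctly located one of the two places where the Barroero--Widmer argument uses o-minimal uniform finiteness non-effectively, namely the Davenport constant, and your treatment of that part is essentially sound (your $n=\bigO_\mathcal{F}(1)$ observation via \textbf{S2} is fine, and bounding connected components of the line-slices via \textbf{S1} after substitution and projection does give $M=\poly_\mathcal{F}(D)$). However, two points are off. First, the Davenport constant is not needed for $Z_T$ but for $\Psi(Z_T)$, where $\Psi$ is the endomorphism carrying an adapted basis of $\Lambda$ to the standard basis; since $\Psi$ varies with the lattice, you must bound the slices of the whole family $W=\{(\Psi,T,x):x\in\Psi(Z_T)\}$ uniformly in $\Psi$ as well as $T$ (this is Lemma~\ref{lem:uniform_davenport_constant}). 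This is a repairable oversight, since $W$ is definable with controlled format and degree.

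The more serious issue is your claim that, once $M$ is known, the transfer to a general lattice ``is identical to \cite{lattice_counting} and introduces no constant worse than $\poly_\mathcal{F}(D)$,'' packaged as an explicit $c(M,n)$. That is not how the original proof works. Davenport's inequality applied to $\Psi(Z_T)$ produces the quantities $V_j(\Psi(Z_T))$, and converting these into $V_j(Z_T)/(\lambda_1\cdots\lambda_j)$ passes through $V'_j(Z_T)$, the supremum of volumes of projections onto \emph{arbitrary} $j$-dimensional subspaces. Bounding $V'_j(Z_T)$ by a constant times $V_j(Z_T)$ is a second, independent application of o-minimality: it requires definable choice to build an auxiliary family $Z'$ of $j$-dimensional subsets parametrised by $O_n(\R)\times\R^m$, the coincidence of Hausdorff and o-minimal dimension, and a uniform bound on the cardinality of fibres of coordinate projections restricted to $Z'_{(\Psi,T)}$ — again obtained from cell decomposition and hence a priori non-effective. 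Making that constant $\poly_\mathcal{F}(D)$ is the content of Lemma~\ref{lem:sharp_bound_on_Hj}, Lemma~\ref{lem:auxilliary_set} and Proposition~\ref{prop:bound_V_prime}, and it needs the sharp version of definable choice (Proposition~\ref{prop:sharp_definable_choice}) as well as \#cell-decomposition. Your proposal omits this entirely, so as written it does not establish that the final constant is $\poly_\mathcal{F}(D)$.
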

	
	\noindent We use Theorem \ref{thm:sharp_lattice_count} to find an effective version of Theorem \ref{thm:BW_lattice_counting} for $\R_{\exp}$-definable sets with a given existential definition. While we do know by Wilkie's Theorem \cite{Wilkies_Theorem} in $\R_{\exp}$ that all $\R_{\exp}$-definable sets have an existential definition, it is not known how to effectively convert general formulae into existential ones in $\R_{\exp}$ or, indeed, whether or not this is even possible. In our setting given a formula $\varphi$ with format $\F$ and degree $D$ it is not known whether there is an effective bound on the format and degree of the existential formula that it is equivalent to. Hence we require a definition which can be effectively converted into existential form in order to preserve effectivity. We also require the Pfaffian notions of format and degree here. The definitions of Pfaffian format and degree can be found in Section \ref{sec: effec LPC Rexp}.
	
	\begin{theorem}\label{thm:Rexp_lattice_count}
		Let $Z \subseteq \R^{n+m}$ be definable in $\R_{\exp}$ by some existential formula, $\varphi$, of Pfaffian format $\mathcal{F}$ and degree $D$. Suppose the fibre, $Z_T \subseteq \R^n$, is bounded. Then there exists some constant $c = \poly_\mathcal{F}(D)$ such that for all lattices $\Lambda \subseteq \R^n$,
		$$
		\left| \left|Z_T \cap \Lambda\right| - \frac{\Vol(Z_T)}{\det(\Lambda)} \right| \leq c \sum^{n-1}_{j=0} \frac{V_j(Z_T)}{\lambda_1 \ldots \lambda_j},
		$$
		where $V_j(Z)$ is the sum of volumes of the $j$-dimensional orthogonal projections of $Z$ onto the coordinate spaces obtained by setting $n-j$ coordinates to zero and $V_0(Z)$ is taken to be $1$ and $\lambda_1, \ldots, \lambda_n$ are the successive minima of $\Lambda$ with respect to a zero centred unit ball. 
	\end{theorem}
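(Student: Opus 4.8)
The plan is to deduce Theorem \ref{thm:Rexp_lattice_count} from Theorem \ref{thm:sharp_lattice_count} by transporting the problem into the restricted sub-Pfaffian structure $\R_{\text{rPfaff}}$, which is \#o-minimal by \cite{wilkies_conjecture}. The point is that an existential $\R_{\exp}$-formula is built from polynomial (in)equalities and the unrestricted exponential, and on any bounded region the exponential agrees with a restricted Pfaffian function (it satisfies $f'=f$); so the entire content is the passage from an existential $\R_{\exp}$-formula of Pfaffian format $\F$ and degree $D$ to an $\R_{\text{rPfaff}}$-formula defining the same set on the relevant bounded region, with effective control of the resulting \#o-minimal format and degree. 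Once this is done, Theorem \ref{thm:sharp_lattice_count} is directly applicable.

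Concretely, I would first fix $R>0$ and work with the subfamily $Z\cap([-R,R]^n\times\R^m)$; since $Z_T$ is bounded I may choose $R$ with $Z_T\subseteq[-R,R]^n$, and the $T$-fibre of this subfamily is exactly $Z_T$. Next I reparametrise the parameter space $\R^m$ by a semialgebraic homeomorphism onto a bounded box: this is a bijection on the set of fibres and leaves each $Z_T$ unchanged as a subset of $\R^n$, hence affects neither side of the claimed inequality, and it bounds all polynomial expressions in $T$ and $x$ occurring in $\varphi$. The remaining unboundedness comes from the existentially quantified witnesses; here I would split their domain according to the signs and rough magnitudes of the polynomial arguments of the exponentials appearing in $\varphi$, and on each piece replace those occurrences of the unrestricted $\exp$ by $\exp$ restricted to a compact interval (or by a semialgebraically reparametrised restricted exponential). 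By the definition of Pfaffian format and degree in Section \ref{sec: effec LPC Rexp}, the number of such pieces and the Pfaffian data of the restricted exponentials introduced are bounded in terms of $\F$ and $D$. Invoking the effective closure of \#o-minimal structures under Boolean operations and coordinate projection from \cite{sharp_o_minimality}, this exhibits $Z\cap([-R,R]^n\times\text{box})$ as a family definable in $\R_{\text{rPfaff}}$ of format $\bigO_\F(1)$ and degree $\poly_\F(D)$, with these bounds uniform in $R$ (the constant $R$ enters only through degree-$2$ inequalities $x_i^2\le R^2$).

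Finally I would apply Theorem \ref{thm:sharp_lattice_count} to this $\R_{\text{rPfaff}}$-family: for every lattice $\Lambda\subseteq\R^n$ we obtain the desired inequality with constant $c=\poly_{\bigO_\F(1)}\!\big(\poly_\F(D)\big)=\poly_\F(D)$, independent of $R$, and the quantities $\Vol(Z_T)$, $V_j(Z_T)$, $\det(\Lambda)$, $\lambda_1,\dots,\lambda_n$ are untouched by the parameter reparametrisation. The main obstacle I anticipate is the second step: making the reduction of the unrestricted exponential to restricted Pfaffian data genuinely effective and uniform in the parameters, in particular taming the asymptotic regimes of the quantified witnesses and checking that the format/degree bookkeeping through the case-split and the ensuing Boolean and projection operations remains polynomial in $D$ with the dependence on $\F$ absorbed into the constants. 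This is precisely what the notion of Pfaffian format and degree of Section \ref{sec: effec LPC Rexp} is designed to encode, so I expect the argument to go through once those definitions are unwound.
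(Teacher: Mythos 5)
Your overall strategy (push everything into $\R_{\text{rPfaff}}$ and invoke Theorem \ref{thm:sharp_lattice_count}) is the right one, and your final constant bookkeeping $c=\poly_{\bigO_\F(1)}(\poly_\F(D))=\poly_\F(D)$ is fine, but the central reduction step has a genuine gap. You claim that, after a semialgebraic reparametrisation of the parameter space onto a bounded box and a \emph{finite} case-split on the ``signs and rough magnitudes'' of the arguments of the exponentials, the family becomes a single $\R_{\text{rPfaff}}$-definable family of controlled format and degree. This is impossible in general: $\R_{\text{rPfaff}}$ is generated by restrictions of (analytic) Pfaffian functions to compact boxes, hence is a reduct of $\R_{\text{an}}$ and in particular polynomially bounded, so it cannot define any set encoding the exponential on an unbounded domain. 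Concretely, for $Z=\{(T,x):x=e^{-T^2}\}\subseteq\R\times[0,1]$ (bounded fibres, parameter reparametrised into $(-1,1)$) the resulting function is infinitely flat at the boundary of the parameter box and is not definable in any polynomially bounded structure; likewise no finite splitting of the witness domain turns an unrestricted $\exp$ into restricted ones. Your fallback of a ``semialgebraically reparametrised restricted exponential'' also fails, since a restricted Pfaffian function must be analytic on a neighbourhood of the \emph{closed} box.

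The paper's proof sidesteps the one-shot definability claim entirely. Writing $\varphi$ as $\exists y\,\psi(x,y)$ with $Y=\psi(\R)$ and $Z=\pi(Y)$, one forms for each $M\in\N$ the truncation $Y_M=Y\cap[-M,M]^{N}$ in \emph{all} coordinates (free variables and existential witnesses alike), which is defined by the same formula with every occurrence of $\exp$ replaced by its restriction to a compact interval depending on $M$. The key point is that every such restriction has the \emph{same} Pfaffian format and degree as $\exp$, so all the $Z_M=\pi(Y_M)$ have *-format $\F$ and *-degree $\poly_\F(D)$ uniformly in $M$; Theorem \ref{thm:sharp_lattice_count} then yields a single constant $c=\poly_\F(D)$ valid for every member of this countable collection, and for each fixed $T$ one chooses $M$ large enough that $(Z_M)_T=Z_T$ (using boundedness of the fibre) to transfer the estimate. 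This uniform-in-$M$ exhaustion --- the Jones--Thomas extension trick mentioned at the start of Section \ref{sec: effec LPC Rexp} --- is the idea missing from your argument, and it is what replaces the impossible reduction of the whole family to a restricted sub-Pfaffian one.
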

	In the proof of this theorem we use the fact that for an existentially formula we can construct a family of $\R_{\textnormal{Pfaff}}$ definable subsets $\mathcal{Z}'=(Z'_M:M\in\R)$ with uniform format and degree such that for each $T\in\R^n$ we have $Z'_{M,T}=Z_T$ for some $M$. This proof is given in Section \ref{sec: effec LPC Rexp}. In Section \ref{sec: sharp omin} we give the background material from sharp o-minimality that we require. In section \ref{sec: sharp BW} we adapt the argument of Barroero and Widmer in the proof of Theorem \ref{thm:BW_lattice_counting} to the sharp o-minimality setting before proving Theorem \ref{thm:sharp_lattice_count}. We conclude the paper with a discussion on effective o-minimality in Section \ref{sec: eff omin}.

	\section{Sharp O-minimality}\label{sec: sharp omin}  
	\noindent In \cite{sharp_o_minimality}, Binyamini, Novikov and Zak introduce varied strengths of \#o-minimality: presharp, weakly sharp and sharply o-minimal structures. The notion of \#o-minimality in \cite{wilkies_conjecture} corresponds to weakly sharp o-minimality in \cite{sharp_o_minimality}. For the sake of applications we will assume we are working in a \#o-minimal structure in the sense of \cite{sharp_o_minimality}. We recall Definition 1.1 in \cite{sharp_o_minimality}.
	
	\begin{definition}[FD-filtrations]
		Let $\mathcal{R}$ be an o-minimal expansion of the real field, $\overline{\R}$. We say that $\Omega = \{\Omega_{\mathcal{F},D}\}_{\mathcal{F}, D \in \N }$ is an \textit{FD-filtration} on $\mathcal{R}$ if
		\begin{enumerate}
			\item every $\Omega_{\mathcal{F}, D}$ is a collection of definable sets,
			\item $\Omega_{\mathcal{F},D} \subset \Omega_{\mathcal{F} + 1, D} \cap \Omega_{\mathcal{F}, D+1}$ for every $\mathcal{F}, D$, and
			\item every definable set is a member of $\Omega_{\mathcal{F},D}$ for some $\mathcal{F},D$.
		\end{enumerate}
		We call $\mathcal{F}$ the \textit{format} and $D$, the \textit{degree}. We say that a definable function $f$ has \textit{format} $\mathcal{F}$ and \textit{degree} $D$ if its graph, $\Gamma_f$, has format $\mathcal{F}$ and degree $D$.
	\end{definition}
	
	\begin{definition}[\#o-minimality]
		A pair $(\mathcal{R}, \Omega)$ where $\mathcal{R}$ is an o-minimal expansion of $\overline{\R}$ and $\Omega$ is an FD-filtration on $\mathcal{R}$ is called \textit{sharply o-minimal} if for every $\mathcal{F}$ there exists a polynomial $P_\mathcal{F}(x) \in \R[x]$ with non-negative coefficients such that the following axioms are satisfied.
		If $A \in \Omega_{\mathcal{F},D}$ then
		\begin{description}[labelindent=1cm]
			\item[S1] If $A \in \R$, then $A$ has at most $P_\mathcal{F}(D)$ connected components,
			\item[S2] If $A \subseteq \R^l$, then $\mathcal{F} \geq l$,
			\item[S3] If $A \subseteq \R^l$, then $\R \times A, A \times \R \in \Omega_{\mathcal{F} + 1, D}$, and
			\item[S4] If $A \subseteq \R^l$, then $\pi_{l-1}(A), \R^l \setminus A \in \Omega_{\mathcal{F},D}$.
		\end{description}
		If $A_1, \ldots, A_k \subseteq \R^l$ and each $A_i \in \Omega_{\mathcal{F}_i, D_i}$, let $\mathcal{F} = \max_i\{\mathcal{F}_i\}$ and $D = \sum_i D_i$, then 
		\begin{description}[labelindent=1cm]
			\item[S5] $\cup_i A_i \in \Omega_{\mathcal{F},D}$, and
			\item[S6] $\cap_i A_i \in \Omega_{\mathcal{F},D}$.
		\end{description}
		If $P \in \R[x_1, \ldots, x_l]$ then 
		\begin{description}[labelindent=1cm]
			\item[S7] $\{P = 0\} \in \Omega_{l, \deg(P)}$. 
		\end{description}
		The polynomials $P_{\mathcal{F}}$ are taken as part of the data for \#o-minimal structures.
	\end{definition}
	
	\begin{example}
		By \textbf{S7}, the set $\{(x,y,z) \in \R^3: x - y = z^2\} \in \Omega_{3, 2}$. By \textbf{S4}, 
		$\pi_z(\{(x,y,z) : x - y = z^2\}) = \{ (x,y)\in \R^2: \exists z (x - y = z^2)\} \in \Omega_{3,2}$. Therefore, the set $\{(x,y)\in \R^2: x \geq y\} \in \Omega_{3,2}$.
	\end{example}
	\begin{example}
		The set $\{(A,x,y) \in \Mat_{n,m}(\R) \times \R^{n+m} : Ax = y\}$ is the intersection of $n$ sets of the form $\{(A,x,y) \in \Mat_{n,m}(\R) \times \R^{n+m}: \sum_i A_{i, j}x_i = y_j\} \in \Omega_{2(n+m), 2}$. Thus, by \textbf{S5},  $\{(x,y) \in \R^{n+m} : Ax = y\} \in \Omega_{2(n+m), 2n}$.
	\end{example}
	
	\begin{definition}
		We say that a structure $(\mathcal{R}, \Omega)$ where $\mathcal{R}$ is an o-minimal expansion of $\overline{\R}$ with FD-filtration, $\Omega$ is \textit{weakly \#o-minimal} if it satisfies axioms \textbf{S1}-\textbf{S3}, \textbf{S5} and \textbf{S7} of \#o-minimality and the following weaker versions of axioms \textbf{S4} and \textbf{S6}:
		
		\begin{description}[labelindent=1cm]
			\item[W4] If $A \subseteq \R^l$, then $\pi_{l-1}(A), \R^l \setminus A \in \Omega_{\mathcal{F} + 1,D}$.
			\item[W6] If $A_1, \ldots, A_k \subseteq \R^l$ and each $A_i \in \Omega_{\mathcal{F}_i, D_i}$, let $\mathcal{F} = \max_i\{\mathcal{F}_i\}$ and $D = \sum_i D_i$, then $\cap_i A_i \in \Omega_{\mathcal{F} + 1,D}$. 
		\end{description}
	\end{definition}
	
	\begin{definition}
		We say that a structure $(\mathcal{R}, \Omega)$ where $\mathcal{R}$ is an o-minimal expansion of $\overline{\R}$ with FD-filtration, $\Omega$ is \textit{presharp o-minimal} if it satisfies axioms \textbf{S1}-\textbf{S3}, \textbf{W4}, \textbf{S7} and the following weaker versions of axioms \textbf{S5} and \textbf{S6}:\\\\
		If $A_1, A_2 \subseteq \R^l$ and each $A_i \in \Omega_{\mathcal{F}_i, D_i}$, let $\mathcal{F} = \max_i\{\mathcal{F}_i\}$ and $D = \sum_i D_i$, then:
		\begin{description}[labelindent=1cm]
			\item[P5] $A_1 \cup A_2 \in \Omega_{\mathcal{F}+1, D}$, and
			\item[P6] $A_1 \cap A_2 \in \Omega_{\mathcal{F}+1, D}$.
		\end{description}
	\end{definition}
	
	\begin{definition}[\#cell decomposition]
		Let $(\mathcal{R}, \Omega)$ be an o-minimal expansion of $\overline{\R}$ with FD-filtration $\Omega$. We say that $(\mathcal{R}, \Omega)$ has \textit{sharp cell decomposition}, \#cell-decomposition, if for every collection of $k$ definable sets $X_j \subseteq \R^l$, with $X_j \in \Omega_{\mathcal{F},D}$, there exists a cell decomposition of $\R^l$ into $\poly_\mathcal{F}(D, k)$, cells of format $\bigO_\mathcal{F}(1)$ and degree $\poly_\mathcal{F}(D)$ compatible with $X_1, \ldots, X_k$. 
	\end{definition}
	
	\noindent Although it is not known whether all \#o-minimal structures have \#-cell decomposition, Binyamini, Novikov and Zak show that we can always extend a presharp o-minimal structure $(\mathcal{R}, \Omega)$ to a \#o-minimal structure with \#-cell decomposition. More precisely, Binyamini, Novikov and Zak prove the following: 
	
	\begin{definition}[Definition 1.2 \cite{sharp_o_minimality}] (Reductions of FD-filtrations)
		Let $\Omega, \Omega^\prime$ be two FD-filtrations on a structure $\mathcal{S}$. We say that $\Omega$ is \textit{reducible} to $\Omega^\prime$ and write $\Omega \leq \Omega^\prime$ if
		$$
		\Omega_{\mathcal{F}, D} \subseteq \Omega^\prime_{\bigO_{\mathcal{F}}(1), \poly_{\mathcal{F}}(D)} \text{ for all } \mathcal{F}, D \in \N.
		$$
		We say that $\Omega, \Omega^\prime$ are \textit{equivalent} if $\Omega \leq \Omega^\prime$ and $\Omega^\prime \leq \Omega$.
	\end{definition}
	
	\begin{theorem}[Theorem 1.9 \cite{sharp_o_minimality}]\label{thm:presharp_to_sharp}
		Let $(\mathcal{R}, \Omega)$ be a presharp o-minimal structure. Then there exists an FD-filtration, $\Omega^*$, on $\mathcal{R}$ such that $\Omega \leq \Omega^*$ and $(\mathcal{R}, \Omega^*)$ is \#o-minimal with \#cell-decomposition. 
	\end{theorem}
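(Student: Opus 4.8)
The plan is to split the argument into two parts: first, to prove a \emph{polynomial cell decomposition theorem} for the presharp structure $(\mathcal{R},\Omega)$ itself, and then to use it to build the filtration $\Omega^{*}$. For the first part I would show that any finite family $X_{1},\dots,X_{k}\in\Omega_{\mathcal{F},D}$ of subsets of $\R^{l}$ admits a cell decomposition of $\R^{l}$ compatible with every $X_{j}$ into $\poly_{\mathcal{F}}(D,k)$ cells, each cut out by data of format $\bigO_{\mathcal{F}}(1)$ and degree $\poly_{\mathcal{F}}(D)$. This is the usual o-minimal cell-decomposition induction on the ambient dimension $l$, but carried out with uniform complexity bookkeeping. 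The base case $l=1$ is exactly axiom \textbf{S1}: each $X_{j}\subseteq\R$ breaks into at most $P_{\mathcal{F}}(D)$ points and intervals. For the inductive step one passes, over $\R^{l-1}$, to a number --- bounded in terms of $\mathcal{F}$ and $k$ alone --- of definable \emph{families} of boundary functions (the fibrewise endpoints of the $(X_{j})_{t}$ and the bands between consecutive ones), obtained from the $X_{j}$ by that many coordinate projections, complements and finite Boolean combinations; by \textbf{W4}, \textbf{P5}, \textbf{P6} each such step costs only $+\bigO(1)$ in format and a polynomial factor in degree, and since \textbf{S2} forces $l\le\mathcal{F}$ the accumulated cost over all levels stays $\bigO_{\mathcal{F}}(1)$ in format and $\poly_{\mathcal{F}}(D)$ in degree, while \textbf{S1} applied fibrewise keeps the cell count $\poly_{\mathcal{F}}(D,k)$.

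Granting this, I would define $\Omega^{*}$ via a notion of cellular complexity engineered to absorb the $\bigO$-- and $\poly$--slack: roughly, $A\in\Omega^{*}_{\mathcal{F},D}$ when $A\subseteq\R^{l}$ with $l\le\mathcal{F}$ and $A$ admits a cellular presentation of complexity $(\mathcal{F},D)$, with a polynomial hypersurface $\{P=0\}$ in $\R^{l}$ declared to have the presentation of complexity $(l,\deg P)$. One checks that $\Omega^{*}$ is an FD-filtration and that $\Omega\le\Omega^{*}$, the latter because the first part gives every $A\in\Omega_{\mathcal{F},D}$ a presentation of complexity $(\bigO_{\mathcal{F}}(1),\poly_{\mathcal{F}}(D))$. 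The measure should be arranged so that the easy axioms are immediate: \textbf{S2} is built in; \textbf{S3} because cylindrifying a cell adds one coordinate; \textbf{S5}, \textbf{S6} because complexity is additive under $\cup$ and $\cap$; \textbf{S7} by the declared base case; the complementation half of \textbf{S4} because negating a cellular presentation costs nothing; and \textbf{S1}, with $P_{\mathcal{F}}(D)=D$, because a one-dimensional cellular set of complexity $(\cdot,D)$ is a union of $\le D$ points and intervals. (One finally rescales the format polynomials $P_{\mathcal{F}}$, which are part of the data, once to swallow the ambient $\bigO_{\mathcal{F}}(1)$ constants.)

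What remains is the projection half of \textbf{S4} for $\Omega^{*}$ and the \#cell decomposition property of $(\mathcal{R},\Omega^{*})$, and both come from the first part. Given $A\in\Omega^{*}_{\mathcal{F},D}$, apply the polynomial cell decomposition in $\R^{l}$ to get a genuine cell decomposition compatible with $A$; the coordinate projection of each of its cells is again a cell, namely its base, the bases form a cell decomposition of $\R^{l-1}$, and $\pi_{l-1}(A)$ is the union of the bases of the cells meeting $A$ --- which the complexity measure underlying $\Omega^{*}$ is set up to recognise as a member of $\Omega^{*}_{\mathcal{F},D}$. Meanwhile \#cell decomposition for $(\mathcal{R},\Omega^{*})$ is the statement of the first part itself, applied to families drawn from $\Omega^{*}$, since the cells it produces have exactly format $\bigO_{\mathcal{F}}(1)$ and degree $\poly_{\mathcal{F}}(D)$.

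The main obstacle is the uniform format accounting in the first part. The number of cell walls appearing at a given dimension level is only $\poly_{\mathcal{F}}(D)$ and need not be bounded in $\mathcal{F}$ alone, so one cannot treat them one at a time: under the presharp axioms each projection or Boolean step costs $+\bigO(1)$ in format, and a $D$-dependent number of them would force the final format to depend on $D$ and break the \#o-minimal bookkeeping. The construction must instead produce all the walls of a level simultaneously as a collection of definable families bounded in $(\mathcal{F},k)$, and be organised so that each individual output cell traces back through only $\bigO_{\mathcal{F}}(1)$ of these walls; getting the monotonicity and sorting of the fibre functions and the definition of the bands to respect this uniform budget is the technical heart of the proof.
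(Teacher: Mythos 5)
First, a point of reference: the paper does not prove this statement. Theorem \ref{thm:presharp_to_sharp} is imported verbatim from Binyamini--Novikov--Zak (Theorem 1.9 of \cite{sharp_o_minimality}), so there is no in-paper proof to compare against; your proposal has to be judged against the actual argument in that reference. Your two-stage strategy --- first establish a cell decomposition theorem for the presharp structure with $\poly_{\mathcal{F}}(D,k)$ cells of format $\bigO_{\mathcal{F}}(1)$ and degree $\poly_{\mathcal{F}}(D)$, then define $\Omega^{*}$ by cellular presentations and verify the axioms --- is indeed the architecture of their proof, and your observation that \textbf{S2} caps the number of induction levels at $\mathcal{F}$ is the right reason the format cost of the induction itself stays bounded.

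The gap is that the step you label ``the technical heart of the proof'' is the proof, and you have diagnosed it without carrying it out. Under the presharp axioms each application of \textbf{W4}, \textbf{P5}, \textbf{P6} costs $+1$ in format, and in the inductive step over the last coordinate the fibre of a set in $\Omega_{\mathcal{F},D}$ over a base cell can have up to $P_{\mathcal{F}}(D)$ boundary points; any construction that isolates ``the $k$-th boundary function'' or ``the $k$-th band'' by a formula built up one Boolean/projection step per index $k$ produces sets whose format grows with $k$, hence with $D$, which destroys the $\bigO_{\mathcal{F}}(1)$ format claim. Saying the walls must be ``produced simultaneously as families'' and that ``each cell traces back through only $\bigO_{\mathcal{F}}(1)$ walls'' names the requirement but gives no mechanism for meeting it. What Binyamini--Novikov--Zak actually do is sidestep the ambient filtration here: they define cells and cylindrical decompositions as primitive objects with their own recursively defined format and degree (a cell is its base together with one or two wall functions, its format is the base's format plus a constant, and the \emph{number} of sibling cells is charged to the degree, not the format), prove a refinement/preparation theorem for these objects, and only then compare back to $\Omega$. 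Relatedly, your verification of \textbf{S6} for $\Omega^{*}$ (``complexity is additive under $\cap$'') is not free: intersecting two unions of cells requires a common refinement, i.e.\ another invocation of the very decomposition theorem whose complexity you are trying to control. As it stands the proposal is a faithful outline of the known proof with its central difficulty flagged rather than resolved.
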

	
	\noindent As a result of Theorem \ref{thm:presharp_to_sharp} we will henceforth assume that \#o-minimal structures we refer to have \#cell-decomposition unless otherwise specified. For our proof of Theorem \ref{thm:sharp_lattice_count} we require the following definability results in \#o-minimal structures analogous to Lemma 3.15 in \cite{lattice_counting}.
	
	\begin{lemma}\label{lem:sharp_family_closure}
		Let $Z \subseteq \R^{m +n}$, with $Z \in \Omega_{\mathcal{F},D}$, be a definable family in some \#o-minimal expansion $(\mathcal{R}, \Omega)$ of the real field. Then
		\begin{enumerate}
			\item $\{(T, x) \in \R^{m + n}: x \in Z_T^c\} \in \Omega_{\bigO_\mathcal{F}(1), \poly_\mathcal{F}(D)}$,
			\item $\{(T,x) \in \R^{m + n}: x \in \interior(Z_T)\} \in \Omega_{\bigO_\mathcal{F}(1), \poly_\mathcal{F}(D)}$,
			\item $\{(T,x) \in \R^{m + n}: x \in \cl(Z_T) \}\in \Omega_{\bigO_\mathcal{F}(1), \poly_\mathcal{F}(D)}$ and
			\item $\{(T,x) \in \R^{m + n}: x \in \partial(Z_T)\} \in \Omega_{\bigO_\mathcal{F}(1), \poly_\mathcal{F}(D)}$,
		\end{enumerate}
		where $Z^c_T$ is the complement of $Z_T$, $\interior(Z_T)$, its interior, $\cl(Z_T)$, its closure and $\partial(Z_T)$, its boundary.
	\end{lemma}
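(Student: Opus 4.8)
The plan is to follow the o-minimal argument of \cite{lattice_counting} essentially verbatim — expressing each of the four families by a short first-order formula built from $Z$ — and then to read off format and degree bounds by tracking the effect of axioms \textbf{S3}--\textbf{S7}. The organising observation is that, by \textbf{S2}, $\mathcal{F}\ge m+n$, so $m,n\le\mathcal{F}$; hence every formula below uses only $\bigO_\mathcal{F}(1)$ auxiliary variables, quantifiers and polynomial atoms, and only $\bigO_\mathcal{F}(1)$ applications of the axioms are needed. Since the only axioms that raise the degree are \textbf{S5} and \textbf{S6}, and they merely \emph{add} degrees (while \textbf{S3}, \textbf{S4}, \textbf{S7} change the degree by at most $\bigO_\mathcal{F}(1)$), such a bounded sequence of operations leaves the degree at $\poly_\mathcal{F}(D)$; and since only \textbf{S3} raises the format, by exactly $1$ each time, the format stays $\bigO_\mathcal{F}(1)$.

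Part (1) is immediate, since $\{(T,x)\in\R^{m+n}: x\in Z_T^c\}=\R^{m+n}\setminus Z\in\Omega_{\mathcal{F},D}$ by \textbf{S4}. For part (2) I would start from the defining formula
\[
\{(T,x): x\in\interior(Z_T)\}=\big\{(T,x): \exists\,\delta>0\ \forall y\ \big(\textstyle\sum_{i=1}^{n}(x_i-y_i)^2\ge\delta^2\ \lor\ (T,y)\in Z\big)\big\},
\]
and build the right-hand side step by step: adjoin the $n+1$ dummy coordinates $y,\delta$ to $Z$ by \textbf{S3} and, after a coordinate reordering (itself a composition of $\bigO_\mathcal{F}(1)$ applications of \textbf{S3}, \textbf{S6}, \textbf{S7} and \textbf{S4}), obtain $B=\{(T,x,y,\delta):(T,y)\in Z\}$; by \textbf{S7} together with a projection to delete a single square-root variable and \textbf{S4}, the semialgebraic sets $\{\sum_i(x_i-y_i)^2\ge\delta^2\}$ and $\{\delta>0\}$ (viewed inside $\R^{m+2n+1}$) have format and degree $\bigO_\mathcal{F}(1)$; then take the union with $B$ by \textbf{S5}, complement by \textbf{S4}, project out the $n$ coordinates $y$ by repeated \textbf{S4} (after a reordering), complement again, intersect with $\{\delta>0\}$ by \textbf{S6}, and project out $\delta$ by \textbf{S4}. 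By the bookkeeping of the previous paragraph the result lies in $\Omega_{\bigO_\mathcal{F}(1),\poly_\mathcal{F}(D)}$.

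Parts (3) and (4) should then follow formally from (1), (2) and the Boolean axioms. For (3), using $\cl(Z_T)=\big(\interior(Z_T^c)\big)^c$, I would set $W:=\{(T,x):x\in Z_T^c\}$, which is in $\Omega_{\mathcal{F},D}$ by (1), apply (2) to the family $W$ to get $\{(T,x):x\in\interior(W_T)\}\in\Omega_{\bigO_\mathcal{F}(1),\poly_\mathcal{F}(D)}$, and take its complement by \textbf{S4}. For (4), from $\partial(Z_T)=\cl(Z_T)\cap\interior(Z_T)^c$ I would intersect (by \textbf{S6}) the family from (3) with the complement (by \textbf{S4}) of the family from (2).

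The main obstacle I anticipate is the bookkeeping in part (2): one must check that each auxiliary semialgebraic gadget — closed balls, the square-root and reciprocal encodings of the (non-)strict inequalities, and the coordinate reordering that places $Z$ in the variables $(T,x,y,\delta)$ — has format and degree bounded purely in terms of $\mathcal{F}$, and that the whole formula is assembled from $\bigO_\mathcal{F}(1)$ such pieces. Granting $\mathcal{F}\ge m+n$ this is routine but is the crux of the argument; the reductions of (3) and (4) to (2) are then purely formal.
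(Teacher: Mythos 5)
Your proposal is correct and follows essentially the same route as the paper: complementation for (1), the $\varepsilon$-ball formula assembled via \textbf{S3}--\textbf{S7} with two complement--project alternations for (2), and the formal reductions $\cl(Z_T)=(\interior(Z_T^c))^c$ and $\partial(Z_T)=\cl(Z_T)\cap\interior(Z_T)^c$ for (3) and (4). The only (immaterial) difference is in (1), where the paper writes the set as $(\pi_x(Z)\times\R^n)\cap Z^c$ rather than simply $\R^{m+n}\setminus Z$.
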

	\begin{proof} The proof follows by straightforward application of the axioms of \#o-minimality. We present the proofs of (1) and (2).
		
		\begin{enumerate}
			\item Let $\pi_x : \R^{m + n} \to \R^m$ be the projection map that forgets the last $n$ coordinates. We have that $\pi_x(Z), Z^c \in \Omega_{\mathcal{F}, D}$. The set $\{(T, x) \in \R^{m + n}: x \in Z_T^c\}$ can be defined as $(\pi_x(Z) \times \R^n) \cap Z^c \in \Omega_{\bigO_\mathcal{F}(1), \poly_\mathcal{F}(D)}$.
			
			\item By the axiom for semi-algebraic sets, \textbf{S7}, we have $$S= 
			\{(T, x, \varepsilon, y) \in \R^{m + n + 1 + n}: \varepsilon > 0, |x-y| \leq \varepsilon \} \in \Omega_{\bigO_\mathcal{F}(1), \bigO_\mathcal{F}(1)}.$$ 
			By applying (1) and permuting variables we obtain,
			$$ U =  \{(T,x,\varepsilon ,y) \in \R^{m + n} \times \R \times \R^n : y \in Z_T^c\} \in \Omega_{\bigO_\mathcal{F}(1), \poly_\mathcal{F}(D)}.$$
			Let $$S^\prime = S \cap U \in \Omega_{\bigO_\mathcal{F}(1), \poly_\mathcal{F}(D)}.$$
			Then we have 
			$$S^\prime = \{(T, x, \varepsilon, y) \in \R^{m+n}\times \R \times \R^n : \varepsilon > 0, |x-y| \leq \varepsilon, y \in Z_T^c \}.$$
			Let $\pi_y: \R^{m + n + 1 +n} \to \R^{m + n + 1}$ denote the projection map that forgets the last $n$-coordinates and let $\pi_{\varepsilon} : \R^{m + n + 1} \to \R^{m + n}$ be the map that forgets the last coordinate.
			The set $\{(T,x) \in \R^{m + n}: x \in \interior(Z_T)\}$ can be written as 
			\begin{align*}
				\pi_\varepsilon((\pi_y(S^\prime))^c) = \{&(T,x)\in \R^{m + n}: \exists \varepsilon > 0 \text{ such that }\\ &\neg \exists y \in \R^n \text{ such that } (T, x, \varepsilon, y) \in S^\prime\}.
			\end{align*}
			It follows then that 
			$$\{(T,x) \in \R^{m + n}: x \in \interior(Z_T)\} \in \Omega_{\bigO_\mathcal{F}(1), \poly_\mathcal{F}(D)}.$$
		\end{enumerate}
	\end{proof}
	
	\noindent We will also require the following sharp version of definable choice.
	
	\begin{proposition}[Proposition 4.1 \cite{sharp_o_minimality}]\label{prop:sharp_definable_choice}
		Let $Z \subseteq \R^{m + n}$ be a definable family such that $Z \in \Omega_{\mathcal{F},D}$ and for all $T \in \R^m$, the fibre $Z_T$ is non-empty. Let $\pi_m: \R^{m + n} \to \R^m$ denote the projection map to the first $m$ coordinates. Then there exists some definable function $f: \pi_m(Z) \to \R^n$ such that for all $T \in \pi_m(Z)$, the image $f(T)$ is in $Z_T$ and $\Gamma_f \in \Omega_{\bigO_\mathcal{F}(1), \poly_\mathcal{F}(D)}$.
	\end{proposition}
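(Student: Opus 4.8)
The plan is to run the classical definable-choice argument through a \#cell decomposition, tracking format and degree at every step. Since $(\mathcal{R},\Omega)$ may be assumed to admit \#cell decomposition (Theorem \ref{thm:presharp_to_sharp}), I would first apply it to the single set $Z$: this yields a decomposition of $\R^{m+n}$ into $N=\poly_\mathcal{F}(D)$ cells $C_1,\dots,C_N$, each of format $\bigO_\mathcal{F}(1)$ and degree $\poly_\mathcal{F}(D)$, compatible with $Z$. Let $I=\{i:C_i\subseteq Z\}$, so that $Z=\bigcup_{i\in I}C_i$ disjointly. For every $T\in\pi_m(Z)$ the fibre $Z_T$ is non-empty, so it contains a point lying in some $C_i$ with $i\in I$; hence $\bigcup_{i\in I}\pi_m(C_i)=\pi_m(Z)$. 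Note $\mathcal{F}\ge m+n$ by \textbf{S2}, so \textbf{S4} applied $m\le\mathcal{F}$ times gives $\pi_m(C_i)\in\Omega_{\bigO_\mathcal{F}(1),\poly_\mathcal{F}(D)}$ for each $i$.

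Next I would attach to each $C_i$ a definable section $\sigma_i\colon\pi_m(C_i)\to\R^n$ with $(T,\sigma_i(T))\in C_i$. Viewing $C_i$ as a cell over the base $\pi_m(C_i)$, it is built in the last $n$ coordinates by alternately taking graphs of definable functions $\xi_j$ and bands between bounding functions $\xi_{j,1},\xi_{j,2}$; following the usual recipe, the $j$-th coordinate of $\sigma_i(T)$ is either $\xi_j(T,\sigma_i(T)_{<j})$ (graph case) or the midpoint $\tfrac12(\xi_{j,1}+\xi_{j,2})$ of the bounding functions, or one of $\xi_{j,1}+1$, $\xi_{j,2}-1$, $0$ when an endpoint is infinite (band case). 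The technical heart of the proof — and the step I expect to require the most care — is to check that $\Gamma_{\sigma_i}\in\Omega_{\bigO_\mathcal{F}(1),\poly_\mathcal{F}(D)}$. This rests on two points. First, the cell-defining functions have graphs of format $\bigO_\mathcal{F}(1)$ and degree $\poly_\mathcal{F}(D)$: a graph-type cell is literally the graph of its defining function, while the graph of a band-bounding function is extracted from $\cl(C^{(j)})$ and $\partial(C^{(j)})$ via Lemma \ref{lem:sharp_family_closure} together with a bounded-degree semialgebraic selection of the minimal/maximal value in the last fibre coordinate, and the intermediate cells $C^{(j)}=\pi_{m+j}(C_i)$ are controlled by \textbf{S4}. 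Second, forming midpoints, offsets, and compositions of such functions, and then conjoining the $n\le\mathcal{F}$ coordinate conditions, amounts to a bounded number of products (\textbf{S3}), intersections (\textbf{S6}), projections (\textbf{S4}), and intersections with bounded-degree semialgebraic sets such as $\{2v=a+b\}$ (\textbf{S7}); this keeps the format $\bigO_\mathcal{F}(1)$ and the degree $\poly_\mathcal{F}(D)$.

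Finally I would glue the sections together along a disjointification of the base. Fix the ordering on $I$ and set, for $i\in I$,
$$
B_i \;=\; \pi_m(C_i)\setminus\bigcup_{\substack{j\in I\\ j<i}}\pi_m(C_j),
$$
which lies in $\Omega_{\bigO_\mathcal{F}(1),\poly_\mathcal{F}(D)}$ by \textbf{S4}, \textbf{S5} and \textbf{S6}, the degree remaining polynomial in $D$ since $|I|\le N=\poly_\mathcal{F}(D)$. The sets $\{B_i:i\in I\}$ partition $\pi_m(Z)$, so
$$
\Gamma_f \;:=\; \bigcup_{i\in I}\bigl(\Gamma_{\sigma_i}\cap(B_i\times\R^n)\bigr)
$$
is the graph of a single function $f\colon\pi_m(Z)\to\R^n$ with $(T,f(T))\in C_i\subseteq Z$, i.e. $f(T)\in Z_T$, for $T\in B_i$. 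Using \textbf{S3} ($n\le\mathcal{F}$ times) to form the products $B_i\times\R^n$, then \textbf{S6} for the intersections, and then \textbf{S5} for the union over the $\poly_\mathcal{F}(D)$ indices $i\in I$, we obtain $\Gamma_f\in\Omega_{\bigO_\mathcal{F}(1),\poly_\mathcal{F}(D)}$, as required. An essentially equivalent alternative is to induct on $n$, reducing to the case $n=1$ where one selects (say) the midpoint of the first band or the first point of the first fibre of $Z_T$; the bookkeeping is the same, the only subtlety being, again, to verify that ``first connected component of the fibre'' and its distinguished point are cut out by the admissible operations with format $\bigO_\mathcal{F}(1)$ and degree $\poly_\mathcal{F}(D)$.
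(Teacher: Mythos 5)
The paper never proves this proposition: it is imported verbatim as Proposition 4.1 of \cite{sharp_o_minimality}, so there is no in-paper argument to compare against. Your proof is the standard definable-choice construction run through \#cell decomposition with complexity bookkeeping, which is exactly the route one expects behind the cited result, and the bookkeeping is sound: the number of cells, the number of boolean/projection operations per cell (bounded in terms of $n\le\mathcal{F}$), and the number of glued pieces are all $\poly_\mathcal{F}(D)$ or $\bigO_\mathcal{F}(1)$, so formats stay $\bigO_\mathcal{F}(1)$ and degrees stay $\poly_\mathcal{F}(D)$ under \textbf{S3}--\textbf{S7}. Two small remarks. First, to obtain $\pi_m(C_i)$ from $C_i\subseteq\R^{m+n}$ you apply \textbf{S4} $n$ times (forgetting the last $n$ coordinates), not $m$ times; this is harmless since $n\le\mathcal{F}$ by \textbf{S2}. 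Second, your argument tacitly requires the \#cell decomposition to be cylindrical over the first $m$ coordinates and requires the cell-bounding functions themselves to have controlled complexity, whereas the definition of \#cell decomposition in the paper only bounds the format and degree of the cells as sets; your recovery of the bounding functions from $\cl$ and $\partial$ of the intermediate projections via Lemma \ref{lem:sharp_family_closure}, together with a definable $\sup$/$\inf$ selection in the last fibre coordinate, is precisely the patch needed here, and it is worth stating explicitly that this is where the hypothesis of \#cell decomposition (guaranteed by Theorem \ref{thm:presharp_to_sharp}) is consumed.
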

	
	
	\section{Lattice Point Counting in Sharp O-minimal Structures}\label{sec: sharp BW}
	
	Barroero and Widmer's proof of Theorem \ref{thm:BW_lattice_counting} in \cite{lattice_counting}  relies on the following theorem of Davenport which counts integer points on certain compact subsets of $\R^n$.
	
	\begin{theorem}[Davenport]
		Let $n$ be a positive integer, and let $Z$ be a compact subset of $\R^n$ such that there exists some constant $h \in \N$ such that
		\begin{enumerate}
			\item the intersection of $Z$ with any line parallel to one of the coordinate axes of $\R^n$ consists of at most $h$ intervals, and
			\item for any $1 \leq j \leq n-1$, the orthogonal projection of $Z$ onto a $j$-dimensional coordinate subspace of $\R^n$ obtained by equating $n - j$ of the coordinates to zero, the same is true
		\end{enumerate}
		then 
		$$
		||Z \cap \Z^n| - \Vol(Z)| \leq \sum^{n-1}_{j=0} h^{n-j} V_j(Z).
		$$
	\end{theorem}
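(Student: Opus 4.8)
\emph{Proof sketch proposal.} The plan is to prove the estimate by induction on $n$. For the base case $n=1$ a compact $Z\subseteq\R$ is a union of at most $h$ closed intervals, and on an interval $[a,b]$ the count $|[a,b]\cap\Z|$ differs from the length $b-a$ by at most $1$; summing over the (at most $h$) intervals gives $\bigl|\,|Z\cap\Z|-\Vol(Z)\,\bigr|\le h=h^{1}V_0(Z)$, as required.

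For the inductive step I would write points of $\R^n$ as $(x,t)$ with $x\in\R^{n-1}$, $t\in\R$, let $\pi\colon\R^n\to\R^{n-1}$ be the projection forgetting the last coordinate, put $Z'=\pi(Z)$, and for $x\in\R^{n-1}$, $t\in\R$ set $Z_x=\{t:(x,t)\in Z\}\subseteq\R$ and $Z^t=\{x:(x,t)\in Z\}\subseteq\R^{n-1}$. Splitting the total discrepancy fibrewise over the last coordinate,
$$
|Z\cap\Z^n|-\Vol_n(Z)=\underbrace{\sum_{x\in\Z^{n-1}}\bigl(|Z_x\cap\Z|-\Vol_1(Z_x)\bigr)}_{A}+\underbrace{\Bigl(\sum_{x\in\Z^{n-1}}\Vol_1(Z_x)-\int_{\R^{n-1}}\Vol_1(Z_x)\,dx\Bigr)}_{B},
$$
where both sums are finite because $Z$ is bounded. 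Each $Z_x$ is the intersection of $Z$ with a line parallel to the last axis, hence a union of at most $h$ intervals, so $|A|\le h\,|Z'\cap\Z^{n-1}|$. For $B$, Tonelli's theorem rewrites the sum and the integral fibrewise in $t$ as $B=\int_\R\bigl(|Z^t\cap\Z^{n-1}|-\Vol_{n-1}(Z^t)\bigr)\,dt$, so $|B|\le\int_\R\bigl|\,|Z^t\cap\Z^{n-1}|-\Vol_{n-1}(Z^t)\,\bigr|\,dt$.

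Next I would check that $Z'$ and every slice $Z^t$ are compact and satisfy the hypotheses of the theorem in $\R^{n-1}$ with the same constant $h$: a coordinate projection, or a $t$-slice, of a set meeting every axis-parallel line in at most $h$ intervals again has this property, and the relevant coordinate projections of $Z'$ and $Z^t$ are coordinate projections (resp. $t$-slices of coordinate projections) of $Z$. Applying the inductive hypothesis to $Z'$ bounds $|Z'\cap\Z^{n-1}|$ by $\Vol_{n-1}(Z')+\sum_{j=0}^{n-2}h^{n-1-j}V_j(Z')$, and applying it to each $Z^t$ bounds the integrand in $B$ by $\sum_{j=0}^{n-2}h^{n-1-j}V_j(Z^t)$; then Fubini, in the form $\int_\R\Vol_j\bigl(\pi_S(Z^t)\bigr)\,dt=\Vol_{j+1}\bigl(\pi_{S\cup\{n\}}(Z)\bigr)$, turns the integral of $V_j(Z^t)$ over $t$ into a sum of volumes of $(j+1)$-dimensional coordinate projections of $Z$.

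The one delicate point — and where a careless bound worsens the constant (roughly doubling the lower-dimensional terms) — is the final bookkeeping: instead of immediately bounding the partial sums above by $V_j(Z)$, one must track which coordinate projection $\pi_S(Z)$, $S\subseteq\{1,\dots,n\}$, each contribution corresponds to. The term $A$ produces precisely the coordinate projections with $n\notin S$ (the $\Vol_{n-1}(Z')$ piece supplying the projection onto $\{1,\dots,n-1\}$), each with coefficient $h^{n-|S|}$, together with the $V_0$ term with coefficient $h^{n}$; the term $B$ produces precisely the coordinate projections with $n\in S$ and $1\le|S|\le n-1$, again with coefficient $h^{n-|S|}$. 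Summing, every coordinate projection occurs exactly once with the right power of $h$, which reassembles into $\sum_{j=0}^{n-1}h^{n-j}V_j(Z)$ and closes the induction. I expect this bookkeeping to be the only real obstacle; the descent of the hypotheses to $Z'$ and $Z^t$ and the Fubini/Tonelli steps are routine.
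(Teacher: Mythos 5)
The paper does not prove this statement at all: it is quoted as Davenport's theorem (from ``On a principle of Lipschitz'') and used as a black box in the proof of Theorem \ref{thm:BW_lattice_counting}, so there is no in-paper proof to compare against. Your proposal is a correct reconstruction of Davenport's original inductive argument: the base case, the splitting of the discrepancy into the fibrewise counting error $A$ and the sum-versus-integral error $B$, the verification that $Z'=\pi(Z)$ and the slices $Z^t$ inherit the hypotheses with the same $h$ (since their coordinate projections are coordinate projections, respectively $t$-slices of coordinate projections, of $Z$), and the final bookkeeping all check out. In particular you handle correctly the one genuinely delicate point, namely that the $j=0$ term of the inductive bound for $Z^t$ integrates to $h^{n-1}\Vol_1(\pi_{\{n\}}(Z))$ and that the projections arising from $A$ (those $S$ with $n\notin S$) and from $B$ (those with $n\in S$) partition the index set, so each $\Vol_j(\pi_S(Z))$ appears exactly once with coefficient $h^{n-|S|}$.
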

	
	\noindent The strategy used by Barroero and Widmer in \cite{lattice_counting} consists of the following steps:
	\begin{enumerate}
		\item Assume the fibres, $Z_T$, are compact by considering their closures and boundaries.
		\item Show that there is a uniform Davenport constant, $M$ such that for all $T \in \R^m$ and endomorphisms, $\Phi$, of $\R^n$, $$||\Phi(Z_T) \cap \Z^n| - \Vol(\Phi(Z_T))| \leq \sum^{n-1}_{j=0} M^{n-j} V_j(\Phi(Z_T)).$$ Let $\Lambda \subseteq \R^n$ be a lattice and $\lambda_1, \ldots, \lambda_n$ be its successive minima. Further, let $e_1, \ldots, e_n \in \R^n$ denote the usual basis for $\R^n$. Then there exists a basis for $\Lambda$, say $v_1, \ldots, v_n$, such that $|v_i| \leq i \lambda_i$. Note, that for any $Z_T \subseteq \R^n$, if an endomorphism, $\Psi$, of $\R^n$, maps each $v_i$ to $e_i$ then $|\Psi(Z_T) \cap \Z^n| = |Z_T \cap \Lambda|$ and $\Vol(\Psi(Z_T)) = \frac{\Vol(Z_T)}{\det(\Lambda)}$.
		\item Find a constant $c$ such that for all $\Lambda$ and corresponding $\Psi$, $V_j(\Psi(Z_T)) \leq c\frac{V_j(Z_T)}{\lambda_1 \ldots \lambda_j}$ for all $T$.
		\item Use the bounds for the closure and boundary of $Z_T$ to find a bound for $Z_T$.  
	\end{enumerate}
	
	\noindent We use \#o-minimality to obtain effective bounds in steps (2) and (3) depending only on the format and degree of $Z$. For the remainder of this section, fix a \#o-minimal expansion, $(\mathcal{R}, \Omega)$, of $\overline{\R}$ with \#cell-decomposition.
	
	
	\subsection{Volumes of the Closure and Boundary}
	We begin by stating some lemmas from \cite{lattice_counting} that relate the volumes of closure, interior and boundaries of families of definable sets. These will allow us to first prove Theorem \ref{thm:sharp_lattice_count} under the assumption that each fibre $Z_T$ is compact and then use that to prove the more general case.
	
	\begin{lemma}[Lemma 5.3 \cite{lattice_counting}]\label{lem:volume_of_boundary}
		Let $A \subseteq \R^n$ be a bounded definable set. Then $\Vol(\partial(A)) = 0$. In particular, $A$ is measurable and $\Vol(\interior(A)) = \Vol(A) = \Vol(\cl(A))$.
	\end{lemma}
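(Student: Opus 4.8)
The plan is to reduce the statement to two standard facts available in any o-minimal expansion of $\overline{\R}$: the frontier inequality $\dim(\cl(B)\setminus B)<\dim B$ for every definable $B\subseteq\R^n$, and the fact that a definable subset of $\R^n$ of dimension strictly less than $n$ is Lebesgue-null (which follows directly from cell decomposition, since each cell of dimension $<n$ lies in the graph of a continuous definable function over a lower-dimensional base, so a Fubini-type argument applies).

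First I would note that $\partial A=\cl(A)\setminus\interior(A)$ is definable, being obtained from $A$ by the definable operations of closure, interior and set difference, and that $\partial A=\cl(A)\cap\cl(A^c)$. Every point of $\partial A$ lies in both $\cl(A)$ and $\cl(A^c)$, hence lies either in $A$ — in which case it lies in $\cl(A^c)\setminus A^c$ — or in $A^c$ — in which case it lies in $\cl(A)\setminus A$. Thus
$$\partial A\ \subseteq\ \bigl(\cl(A)\setminus A\bigr)\ \cup\ \bigl(\cl(A^c)\setminus A^c\bigr).$$
Applying the frontier inequality to $B=A$ and to $B=A^c$ shows that both sets on the right have dimension at most $n-1$, hence are Lebesgue-null, and therefore $\Vol(\partial A)=0$.

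Next I would treat the ``in particular'' clause. The set $\cl(A)$ is closed, hence Lebesgue measurable, and since $\cl(A)\setminus A\subseteq\partial A$ is null, $A$ differs from a measurable set by a null set and so is measurable; boundedness of $A$ makes $\Vol(\cl(A))$ finite. From the disjoint decompositions $\cl(A)=A\sqcup(\cl(A)\setminus A)$ and $A=\interior(A)\sqcup(A\setminus\interior(A))$, together with the inclusions $\cl(A)\setminus A\subseteq\partial A$ and $A\setminus\interior(A)\subseteq\partial A$ and the vanishing just established, additivity of $\Vol$ yields $\Vol(\interior(A))=\Vol(A)=\Vol(\cl(A))$.

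\textbf{On the main difficulty.} There is no serious obstacle here: the whole argument rests on the two cited o-minimality facts, and the only point requiring any care is the claim that a definable set of dimension $<n$ is null, which is immediate from cell decomposition. If one prefers to avoid invoking the frontier inequality, it can be bypassed by fixing a cell decomposition of $\R^n$ compatible with $A$: each cell lies entirely in $A$ or in $A^c$, and an $n$-dimensional cell is open in $\R^n$, hence cannot meet $\partial A$ (it lies in $\interior(A)$ or in $\interior(A^c)$), so every cell contained in $\partial A$ has dimension $<n$; the rest then proceeds as above.
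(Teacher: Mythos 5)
Your proof is correct; note that the paper offers no proof of this lemma at all, since it is quoted directly as Lemma 5.3 of Barroero and Widmer \cite{lattice_counting}. Your argument --- covering $\partial A$ by $(\cl(A)\setminus A)\cup(\cl(A^c)\setminus A^c)$, applying the frontier inequality (or, equivalently, a cell decomposition compatible with $A$) and the fact that definable sets of dimension $<n$ are Lebesgue-null --- is the standard one and is essentially the argument in the cited source.
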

	
	\begin{lemma}[Lemma 5.4 \cite{lattice_counting}]\label{lem:Vj_of_closure}
		Let $Z \subseteq \R^{m+n}$ be a definable family and suppose that the fibres $Z_T$ are bounded. Then for $1 \leq j \leq n-1$, the $j$-dimensional volumes of the orthogonal projections of $Z_T$ on every $j$-dimensional subspace of the coordinate subspace of $\R^n$ exist and are finite. Moreover $V_j(Z_T) = V_j(\cl(Z_T))$.
	\end{lemma}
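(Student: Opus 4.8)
The plan is to deduce everything from Lemma \ref{lem:volume_of_boundary} by pushing forward along orthogonal projections. Fix $T \in \R^m$ and an integer $1 \le j \le n-1$. An orthogonal projection $\pi$ of $\R^n$ onto a $j$-dimensional coordinate subspace $W$, identified with $\R^j$, is a linear map, hence semialgebraic and definable in $\mathcal{R}$; therefore $\pi(Z_T)$ is a definable subset of $\R^j$, and it is bounded since $Z_T$ is. By Lemma \ref{lem:volume_of_boundary}, $\pi(Z_T)$ is then Lebesgue measurable (with boundary of measure zero), so its $j$-dimensional volume exists and is finite. Applying this to each of the finitely many coordinate subspaces obtained by equating $n-j$ of the coordinates to zero shows that $V_j(Z_T)$ is well defined and finite; and since $\cl(Z_T)$ is again a bounded definable subset of $\R^n$, the same holds for $V_j(\cl(Z_T))$.

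For the equality $V_j(Z_T) = V_j(\cl(Z_T))$ it suffices, after summing over the relevant coordinate subspaces, to show that $\Vol(\pi(Z_T)) = \Vol(\pi(\cl(Z_T)))$ for each such projection $\pi$ (writing $\Vol$ for $j$-dimensional volume on $\R^j$). Continuity of $\pi$ gives the chain of inclusions
\[
\pi(Z_T) \;\subseteq\; \pi(\cl(Z_T)) \;\subseteq\; \cl(\pi(Z_T)),
\]
and all three sets are bounded and definable, hence Lebesgue measurable by Lemma \ref{lem:volume_of_boundary}. Since $\Vol$ is monotone under inclusion and, again by Lemma \ref{lem:volume_of_boundary} applied to the bounded definable set $\pi(Z_T) \subseteq \R^j$, we have $\Vol(\pi(Z_T)) = \Vol(\cl(\pi(Z_T)))$, the two outer terms of the chain have the same volume, and hence so does the middle term. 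Summing over the coordinate subspaces yields $V_j(Z_T) = V_j(\cl(Z_T))$.

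I do not expect a serious obstacle: the statement is purely measure-theoretic, it involves no format/degree bookkeeping, and it is needed only to set up the geometry before the effective lattice-point estimates. The one point that needs a little care is that $\pi(\cl(Z_T))$ is in general a proper subset of $\cl(\pi(Z_T))$ — one cannot simply assert that projection commutes with closure — so the argument must route through the sandwich inclusion above rather than through an identity of sets; the "boundary has measure zero" conclusion of Lemma \ref{lem:volume_of_boundary} is precisely what makes the sandwich collapse. A second minor point is to recall that the image of a definable set under a definable (here linear) map is definable, which is a standard consequence of o-minimality and requires no effectivity input.
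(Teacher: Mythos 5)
Your proof is correct and is essentially the standard argument: the paper itself gives no proof of this lemma, citing it directly from Barroero--Widmer, and their proof proceeds exactly as yours does — definability and boundedness of the projections plus Lemma \ref{lem:volume_of_boundary} for existence and finiteness, then the sandwich $\pi(Z_T) \subseteq \pi(\cl(Z_T)) \subseteq \cl(\pi(Z_T))$ together with $\Vol(\pi(Z_T)) = \Vol(\cl(\pi(Z_T)))$ for the equality. (A small remark: since $\cl(Z_T)$ is compact, $\pi(\cl(Z_T))$ is in fact closed and equals $\cl(\pi(Z_T))$, so the sandwich collapses to an identity of sets; but your measure-theoretic route is equally valid.)
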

	
	\subsection{Uniform Davenport Constant}
	
	We introduce the following notation from \cite{lattice_counting}. The first two points of notation were first introduced in the statement of Theorem \ref{thm:BW_lattice_counting} but we include them here to provide a distinction from $V^\prime(Z)$.
	\begin{notation*}
		For a set $Z \subseteq \R^n$, with $1 \leq j \leq n-1$, a $j$-dimensional linear subspace, $X$, of $\R^n$, and linear projection map $\pi: \R^n \to X$ we have that
		\begin{itemize}
			\item $\Vol_j(\pi(Z))$ is the $j$-dimensional volume of $\pi(Z)$,  
			\item $V_j(Z)$ is the sum of volumes of the $j$-dimensional orthogonal projections of $Z$ onto the coordinate spaces obtained by setting $n-j$ coordinates to zero, $V_0(Z)$ is taken to be $1$,
			\item $V^\prime_j(Z)$ is the supremum of the volumes of the orthogonal projections of $Z$ onto $j$-dimensional linear subspaces of $\R^n$.
		\end{itemize}
	\end{notation*}
	
	\begin{notation*}
		Let $I \subseteq \{1, \ldots, n\}$ and let $X \subseteq \R^n$. Let $\Lambda \subseteq \R^n$ be a lattice with successive minima $\lambda_1, \ldots, \lambda_n$ and basis $v_1, \ldots, v_n$ such that $|v_i| < i \lambda_i$. Let $e_1, \ldots, e_n$ be the usual basis for $\R^n$.
		\begin{itemize}
			\item Let $\pi_I: \R^n \to \R^{|I|}$ denote the coordinate projection map onto the span of $\{e_i : i \in I\}$.
			\item Let $\Sigma_I = \Vspan\{v_i: i \in I\}$ and let $\Sigma^\prime_I = \Vspan\{v_i: i \not \in I\}$. We let $X^I$ denote the set defined as follows:
			$$
			\{x \in \Sigma_I: x + y \in X \text{ and } y \in \Sigma^\prime_I\}.
			$$
		\end{itemize}
	\end{notation*}
	
	\noindent To obtain an effective uniform Davenport constant we adapt the proof of Lemma 4.1 in \cite{lattice_counting} to show that the constant obtained is effective.
	
	\begin{lemma}\label{lem:uniform_davenport_constant}
		Let $Z \subseteq \R^{m+n}$ be a definable family such that $Z$ has format less than or equal to $\mathcal{F}$ and degree bounded above by $D$ and for all $T \in \R^m$, the fibre $Z_T$ is bounded. Then there exists $M= \poly_\mathcal{F}(D)$ such that for every $T \in \R^m$ and every endomorphism $\Psi$ of $\R^n$, the constant $M$ is a Davenport constant for $\Psi(Z_T)$. That is
		$$
		\left|\left|\Psi(Z_T)\cap \Z^n \right| - \Vol(\Psi(Z_T))\right| \leq \sum_{j = 0}^{n-1}M^{n-j}V_j(\Psi(Z_T)).   
		$$
	\end{lemma}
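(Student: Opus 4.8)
The plan is to follow the structure of the Barroero–Widmer proof of their Lemma 4.1, but to track the format and degree of each auxiliary definable set using the axioms of \#o-minimality together with the definability lemmas already proved. The key point is that Davenport's theorem provides a constant $h$ for each compact $Z$ satisfying the two combinatorial hypotheses (bounded number of intervals on lines parallel to axes, and the same for all coordinate projections), and that $h$ can be taken to be the maximum number of such intervals. So the task reduces to bounding, uniformly in $T$ and in the endomorphism $\Psi$, the number of connected components of the intersection of $\Psi(Z_T)$ with an arbitrary axis-parallel line, and likewise for every coordinate projection of $\Psi(Z_T)$.

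First I would set up the appropriate definable family. Consider the family parametrised by $T \in \R^m$ together with the entries of the matrix of $\Psi$ (an element of $\Mat_n(\R) \cong \R^{n^2}$), whose fibre over $(T,\Psi)$ is $\Psi(Z_T)$; using the example computing $\{(x,y): Ax=y\}$ and Proposition~\ref{prop:sharp_definable_choice} (or just existential projection via \textbf{S4}) this family lies in $\Omega_{\bigO_\F(1),\poly_\F(D)}$. Then, for a fixed coordinate direction, intersect with the family of all lines parallel to that axis — an additional semialgebraic family of format and degree $\bigO_\F(1)$ by \textbf{S7} — and take the fibrewise intersection, which stays in $\Omega_{\bigO_\F(1),\poly_\F(D)}$ by \textbf{S6}. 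The resulting set is a definable family of subsets of $\R$ (after projecting away the line-parameter coordinates other than the one along the line), so axiom \textbf{S1} gives a bound of $P_{\bigO_\F(1)}(\poly_\F(D)) = \poly_\F(D)$ on the number of connected components, uniformly over all choices of $T$, $\Psi$ and the line. The same argument applied to each of the finitely many (at most $2^n = \bigO_\F(1)$, and anyway $\bigO_n(1)$) coordinate projections — each obtained by a bounded number of applications of \textbf{S4}, hence still of format $\bigO_\F(1)$ and degree $\poly_\F(D)$ — yields the same kind of bound. Taking $M$ to be the maximum of all these bounds gives $M = \poly_\F(D)$, and this $M$ is a Davenport constant for $\Psi(Z_T)$ for every $T$ and every $\Psi$ by Davenport's theorem, which is exactly the asserted inequality.

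One technical wrinkle: Davenport's theorem as stated requires $Z$ compact, whereas the fibres $Z_T$ are only assumed bounded, and $\Psi(Z_T)$ need not be compact either. This is handled exactly as in Barroero–Widmer by passing to $\cl(\Psi(Z_T))$: the volume terms are unchanged by Lemma~\ref{lem:volume_of_boundary} and Lemma~\ref{lem:Vj_of_closure}, the lattice-point counts can only increase under closure so one argues on both $Z_T$ and a suitable open approximation, and crucially Lemma~\ref{lem:sharp_family_closure} guarantees that the closure operation preserves format $\bigO_\F(1)$ and degree $\poly_\F(D)$, so all the component counts above still go through. The main obstacle, and the place where the \#o-minimal machinery really does the work, is the uniformity over the infinite-dimensional-looking parameter space of \emph{all} endomorphisms $\Psi$: one must make sure that adjoining the $n^2$ matrix entries as genuine parameters in a single definable family keeps the format bounded by $\bigO_\F(1)$ (only finitely many applications of \textbf{S3} and \textbf{S4}, with $n \le \F$ by \textbf{S2}) rather than letting it grow, so that the single polynomial $P_{\bigO_\F(1)}$ governs all component counts simultaneously. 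Once that bookkeeping is done, the conclusion $M = \poly_\F(D)$ is immediate.
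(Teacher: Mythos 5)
Your proposal is correct and follows essentially the same route as the paper: adjoin the $n^2$ matrix entries of $\Psi$ as parameters to form the single definable family with fibres $\Psi(Z_T)$ (format $\bigO_\F(1)$, degree $\poly_\F(D)$ via \textbf{S7}, \textbf{S6}, \textbf{S4}), then form the family of intersections of its coordinate projections with axis-parallel lines and bound the number of connected components uniformly, taking $M$ to be the maximum over the finitely many choices. The only cosmetic difference is that the paper extracts the uniform component bound from \#cell-decomposition applied to the family $L(I,i_0)$, whereas you apply \textbf{S1} fibrewise (which also works, since each fibre is cut out by $\bigO_\F(1)$ additional linear equations and so stays in $\Omega_{\bigO_\F(1),\poly_\F(D)}$); your remark on passing to closures is handled in the paper at the level of the main theorem rather than inside this lemma, but either placement is fine.
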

	\begin{proof}
		Identify the endomorphism $\Psi$ with its matrix in $\R^{n^2}$. Let 
		\begin{equation}\label{eqn:W}
			W = \{(\Psi, T, x) \in \R^{n^2 + m + n}: x \in \Psi(Z_T)\}.
		\end{equation}
		Let $V = \{(\Psi, T, x, y) \in \R^{n^2 + m + n + n}: x = \Psi(y)\}$. As $V$ is defined by at most $n$ polynomials of degree $2$, it follows that $V \in \Omega_{\bigO_\mathcal{F}(1), \bigO_\mathcal{F}(1)}$. Thus $W$, which is the projection to the first $n^2 + m + n$-coordinates of $V \cap (\R^{n^2 + n} \times Z)$, is definable and is in $\Omega_{\bigO_\mathcal{F}(1), \poly_\mathcal{F}(D)}$.
		\\\\
		Let $I$ be a proper subset if $\{1, \ldots,n\}$. Let $\pi^\prime_I$ be the map on $\R^{n^2 + m + n}$ given by $(\Psi, T, x) \mapsto (\Psi, T, \pi_I(x))$. Note that $\pi^\prime_I(W) \in \Omega_{\bigO_\mathcal{F}(1), \poly_\mathcal{F}(D)}$. 
		\\\\ 
		For $i_0 \in I$, we can parametrise a line in $\pi_I(\R^n)$ parallel to $e_{i_0}$ with a set of real parameters $(l_i)_{i \in I \setminus \{i_0\}} \in \R^{|I| - 1}$ by considering the set $\{(x_i)_{i \in I} \in \pi_I(\R^n): x_i = l_i, \ i \in I \setminus \{i_0\}$. Consider the family of subsets of $\pi^\prime_I(W)$ intersected with such lines:
		\begin{align*}
			L(I, i_0) := \{&((l_i)_{i \in I \setminus \{i_0\}}, \Psi, T, x) \in \R^{|I|-1} \times \R^{n^2 + m + n} : (\Psi, T, x) \in \pi^\prime_I(W),\\
			& l_i = x_i \text{ for } i \in I \setminus \{i_0\}\}. 
		\end{align*}
		Then 
		\begin{align*}
			L(I, i_0) = (\R^{|I|-1} \times \pi^\prime_I(W)) \cap \{&((l_i)_{i \in I \setminus \{i_0\}}, \Psi, T, x) \in \R^{|I|-1} \times \R^{n^2 + m + n} : \\&l_i = x_i \text{ for } i \in I \setminus \{i_0\}\} \in \Omega_{\bigO_{\mathcal{F}}(1), \poly_\mathcal{F}(D)}.
		\end{align*}
		
		\noindent By \#cell-decomposition there is a uniform upper bound $M(I, i_0) = \poly_\mathcal{F}(D)$ on the number of connected components of fibres $L(I, i_0)_{((l_i), \Psi, T)}$. Let $M = \poly_\mathcal{F}(D)$ be the maximum of all $M(I,i_0)$. By construction, $M$ is a uniform Davenport constant for the $\Psi(Z_T)$.
	\end{proof}
	
	\subsection{Uniform Volume Bounds}
	In \cite{lattice_counting}, Barroero and Widmer show that for any endomorphism $\Psi$ of $\R^n$, the sum of the volumes $j$-dimensional coordinate projections of $\Psi(Z_T)$ is bounded by some uniform - across all endomorphisms, $\Psi$ and parameters $T$ - constant times $V_j(Z_T)$. We show that in a \#o-minimal setting, this constant is effective.
	
	\begin{lemma}[Lemma 2.2 \cite{lattice_counting}]\label{lem:endomorphism_image_bound}
		Suppose that $C \subseteq R^n$ is compact. Then for every $j = 1 , \ldots, n-1$,
		$$
		V_j(\Psi(C)) \leq \sum_{|I| = j}\frac{2^j}{B_j} \frac{\Vol_j(C^I)}{\lambda_1 \ldots \lambda_j},
		$$
		where $B_j$ is the volume of the j-dimensional unit ball and for $I \subseteq \{1, \ldots, n\}$, is $C^I$ be the orthogonal projection of $C$ to the subspace of $\R^n$ spanned by $\{e_i: i \in I\}$.
	\end{lemma}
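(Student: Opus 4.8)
The plan is to treat each $j$-dimensional coordinate projection of $\Psi(C)$ separately and to compute the volume distortion of the resulting linear map directly from the lattice basis $v_1,\dots,v_n$. Since $V_j(\Psi(C))=\sum_{|K|=j}\Vol_j(\pi_K(\Psi(C)))$, it suffices to prove, for each $K\subseteq\{1,\dots,n\}$ with $|K|=j$, the bound $\Vol_j(\pi_K(\Psi(C)))\le \tfrac{2^j}{B_j}\,\Vol_j(C^K)/(\lambda_1\cdots\lambda_j)$, and then to sum over all such $K$.

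Fix one such $K$. The key observation I would use is that, since $\Psi$ maps $v_i$ to $e_i$, the linear map $\pi_K\circ\Psi$ is onto $\Vspan\{e_i:i\in K\}$, has kernel exactly $\Sigma'_K=\Vspan\{v_i:i\notin K\}$, and sends $v_i$ to $e_i$ for $i\in K$. Writing $P_K\colon\R^n\to\Sigma_K$ for the projection onto $\Sigma_K=\Vspan\{v_i:i\in K\}$ along $\Sigma'_K$, it follows that $\pi_K\circ\Psi=\overline{\Psi}\circ P_K$, where $\overline{\Psi}\colon\Sigma_K\to\Vspan\{e_i:i\in K\}$ is the isomorphism carrying the basis $\{v_i:i\in K\}$ to the standard basis vectors $e_i$, $i\in K$. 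Thus $\overline{\Psi}$ maps the parallelepiped spanned by $\{v_i:i\in K\}$, whose $j$-dimensional volume equals the covolume $\delta_K$ of the rank-$j$ sublattice $\Lambda_K:=\sum_{i\in K}\Z v_i$ of $\Lambda$, onto the unit cube, so its Jacobian has absolute value $\delta_K^{-1}$. As $P_K(C)=\{x\in\Sigma_K:x+y\in C\text{ for some }y\in\Sigma'_K\}=C^K$, this gives $\Vol_j(\pi_K(\Psi(C)))=\delta_K^{-1}\Vol_j(C^K)$.

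It then remains to bound $\delta_K$ from below by $\tfrac{B_j}{2^j}\lambda_1\cdots\lambda_j$. For this I would apply Minkowski's second theorem to $\Lambda_K$, viewed as a full-rank lattice in the $j$-dimensional Euclidean space $\Sigma_K$: writing $\mu_1\le\cdots\le\mu_j$ for its successive minima with respect to the unit ball of $\Sigma_K$, one gets $\mu_1\cdots\mu_j\,B_j\le 2^j\delta_K$. Since $\Lambda_K\subseteq\Lambda$, any $i$ linearly independent vectors of $\Lambda_K$ of norm at most $r$ are also $i$ linearly independent vectors of $\Lambda$ of norm at most $r$, whence $\mu_i\ge\lambda_i$ for every $i\le j$; therefore $\delta_K\ge \tfrac{B_j}{2^j}\mu_1\cdots\mu_j\ge \tfrac{B_j}{2^j}\lambda_1\cdots\lambda_j$. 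Substituting this into the identity of the previous paragraph and summing over all $K$ with $|K|=j$ proves the lemma.

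The main obstacle I foresee is bookkeeping rather than any deep difficulty. One must be careful that the factorization $\pi_K\circ\Psi=\overline{\Psi}\circ P_K$ uses the projection onto $\Sigma_K$ \emph{along $\Sigma'_K$}, which is precisely what makes the Jacobian equal $\delta_K^{-1}$ and the oblique section $C^K$ (rather than an orthogonal projection) appear; and the constant $2^j/B_j$ must be pinned down via Minkowski's second theorem for $\Lambda_K$ together with the monotonicity $\mu_i(\Lambda_K)\ge\lambda_i(\Lambda)$. Compactness of $C$ is used only to ensure that $\Psi(C)$, $\pi_K(\Psi(C))$ and $C^K$ are measurable with finite volume; the inequality $|v_i|<i\lambda_i$ is not needed for this lemma, and will enter only afterwards, when $C^K$ is compared with the standard quantity $V_j$.
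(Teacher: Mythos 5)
Your argument is correct, and it is essentially the proof of Lemma~2.2 in \cite{lattice_counting}, which the paper quotes without proof: factor $\pi_K\circ\Psi$ through the oblique projection onto $\Sigma_K$ along $\Sigma'_K$, identify the Jacobian of the induced isomorphism with the reciprocal of the covolume of the sublattice $\Lambda_K$, and bound that covolume below via Minkowski's second theorem together with the monotonicity of successive minima under passing to a sublattice. One point worth flagging: you correctly read $C^I$ as the oblique projection $P_I(C)\subseteq\Sigma_I$ from the paper's Notation block, which is what the argument requires; the parenthetical in the lemma's statement describing $C^I$ as an orthogonal projection onto $\Vspan\{e_i:i\in I\}$ is a misstatement (with that reading the claimed constant would be false, and the subsequent use of Lemma~\ref{lem:orthogonal_projection_bound} would be vacuous).
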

	
	\noindent In \cite{lattice_counting} Barroero and Widmer use the $j$-dimensional Hausdorff measure on $\R^n$ ($j$-Hausdorff measure) to bound certain volumes. As we gain no improvement on their measurability results from the \#o-minimal setting we state only what is directly applicable to the proof of Theorem \ref{thm:sharp_lattice_count}. For greater detail see \cite{lattice_counting}. Let $\mathcal{H}^j$ denote the $j$-Hausdorff measure on $\R^n$ for $n > j$ and let $\mathcal{L}^j$ denote the Lebesque measure on $\R^j$.
	
	\begin{definition}
		Let $X \subseteq \R^n$. We say that $X$ is \textit{$j$-rectifiable} for $j \leq n$ if there exists some Lipschitz function, $R^j \to X$, mapping some bounded subset of $\R^j$ onto $X$. Moreover, $X$ is \textit{$(\mathcal{H}^j, j)$-rectifiable} if there exist countably many $j$-rectifiable sets whose union is $\mathcal{H}^j$-almost $X$ and $\mathcal{H}^j(X) < \infty$. 
	\end{definition}
	
	\begin{proposition}[Proposition 5.2 \cite{lattice_counting}] \label{prop:coincidence_of_dimension}
		Suppose $X \subseteq \R^n$ is non-empty and definable. Then $\dim(X)$ coincides with its Hausdorff dimension. Moreover, if $\dim(X) = d$ and $X$ is bounded, then $X$ is $j$-Hausdorff measurable for every $j$ with $d \leq j \leq n$. Finally, $\mathcal{H}^d(X) < \infty$ and $\mathcal{H}^j(X) = 0$ for $j > d$.
	\end{proposition}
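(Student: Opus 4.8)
The plan is to reduce, via cell decomposition, to a single cell, and then to treat the dimension identity and the finiteness of the top-dimensional Hausdorff measure separately; only the latter requires real work.

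First I would apply the $C^1$ cell decomposition theorem to write $X = \bigsqcup_i C_i$ as a finite disjoint union of $C^1$ cells. Since o-minimal dimension and Hausdorff dimension are both stable under finite unions, $\mathcal{H}^j$ is countably subadditive, and $X$ is Borel (a finite union of locally closed cells) hence $\mathcal{H}^j$-measurable for every $j$, it suffices to prove the remaining assertions for a single cell $C$ of dimension $k \le d$, which is bounded whenever $X$ is. After permuting coordinates, $C = \Gamma_f$ is the graph of a definable $C^1$ map $f : U \to \R^{n-k}$ over an open cell $U \subseteq \R^k$, and $\phi(x) := (x, f(x))$ is a definable $C^1$ bijection $U \to C$ whose inverse is the restriction of a coordinate projection.

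For the dimension identity I would exhaust $U$ by compact sets $K_1 \subseteq K_2 \subseteq \cdots$ with union $U$. Each $\phi|_{K_\ell}$ is Lipschitz (it is $C^1$ on a compact set), so $\dim_{\mathrm{H}}\phi(K_\ell) \le \dim_{\mathrm{H}} K_\ell \le k$; as $C = \bigcup_\ell \phi(K_\ell)$ this gives $\dim_{\mathrm{H}} C \le k$. Conversely, choosing a closed ball $\bar B \subseteq U$, the map $\phi|_{\bar B}$ is bi-Lipschitz onto its image (it is Lipschitz, and its inverse is a $1$-Lipschitz coordinate projection), so $\dim_{\mathrm{H}} C \ge \dim_{\mathrm{H}}\phi(\bar B) = k$. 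Hence $\dim_{\mathrm{H}} C = k = \dim C$, and maximising over the $C_i$ yields $\dim_{\mathrm{H}} X = \dim X = d$; in particular $\mathcal{H}^j(X) = 0$ for all $j > d$ by the definition of Hausdorff dimension.

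It remains to show $\mathcal{H}^k(C) < \infty$ when $C$ is bounded, and this is the main obstacle, since a priori $|D\phi|$ may blow up near $\partial U$. I would handle it with the classical area formula together with the Cauchy--Binet identity: $\mathcal{H}^k(C) = \int_U J\phi \le \sum_{|I| = k}\int_U |\det M_I|$, where $M_I(x)$ is the $k \times k$ submatrix of $D\phi(x)$ on the rows indexed by $I \subseteq \{1,\dots,n\}$. For fixed $I$, $\det M_I$ is the Jacobian of $\pi_I \circ \phi$ for the coordinate projection $\pi_I : \R^n \to \R^k$, so the area formula gives $\int_U |\det M_I| = \int_{\R^k} \#(\pi_I|_C)^{-1}(y)\, dy$. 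Now $\pi_I(C)$ is bounded, hence of finite $\mathcal{L}^k$-measure; by uniform finiteness of fibres in definable families there is a constant $N_I$ with $\#(\pi_I|_C)^{-1}(y) \le N_I$ for every $y$ outside $B_I := \{y : (\pi_I|_C)^{-1}(y)\text{ is infinite}\}$, and $\dim B_I < k$ (else $C$ would have dimension $> k$), so $\mathcal{L}^k(B_I) = 0$ and $B_I$ contributes nothing to the integral. Thus $\int_U |\det M_I| \le N_I\,\mathcal{L}^k(\pi_I(C)) < \infty$, and summing over $I$ gives $\mathcal{H}^k(C) < \infty$. (Alternatively one could invoke a Lipschitz cell decomposition theorem, but the above keeps everything self-contained modulo the area formula.)
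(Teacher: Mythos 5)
Your proof is correct, but note that the paper does not actually prove this proposition: it is quoted verbatim from Barroero--Widmer, and the surrounding discussion in Section 3.3 records how their argument goes, namely by invoking the $C^r$-parametrisation theorem from the proof of Pila--Wilkie to cover $X$ by finitely many Lipschitz images of cubes, which yields $(\mathcal{H}^j,j)$-rectifiability and $\mathcal{H}^d(X)<\infty$ in one stroke. Your route differs precisely at the one non-routine step, the finiteness of $\mathcal{H}^d$ of a bounded cell: you avoid parametrisation entirely, dominating $\mathcal{H}^k(C)$ via the area formula and Cauchy--Binet by $\sum_{|I|=k}\int N(\pi_I|_C,y)\,d\mathcal{L}^k y$, and then bounding the integrand off a null set using o-minimal uniform finiteness together with the fibre-dimension theorem. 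This is more elementary (modulo the classical area formula) and is essentially the same integral-geometric mechanism the paper itself deploys in Lemma \ref{lem:sharp_bound_on_Hj}, where it is deduced from rectifiability via the integral-geometric inequality rather than derived directly for a $C^1$ graph as you do. The remaining steps --- measurability from Borel-ness of cells, the two dimension inequalities via Lipschitz and bi-Lipschitz restrictions of the graph map on compact exhaustions, and $\mathcal{H}^j(X)=0$ for $j>d$ --- are standard and correctly executed. The only caveat worth recording is that your argument does not explicitly produce the $(\mathcal{H}^j,j)$-rectifiability that is used later in the paper (in the proof of Lemma \ref{lem:sharp_bound_on_Hj}); this is not part of the statement you were asked to prove, and in any case it falls out of your decomposition, since each $C^1$ cell is a countable union of Lipschitz images of bounded sets and you have shown the total measure is finite.
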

	
	\noindent Barroero and Widmer \cite{lattice_counting} make use of the $C^r$-parametrisation Lemma from the proof of the Pila-Wilkie Theorem in order to obtain a finite number of Lipschitz functions to show $(\mathcal{H}^j, j)$-rectifiability. In a \#o-minimal structure we have an alternate version of $C^r$-parametrisation which provides an effective bound depending only on the format and degree of $Z$ on the number of Lipschitz functions needed to almost cover $Z$. However, this effective bound does not have an effect on the proof of Theorem \ref{thm:sharp_lattice_count}.
	
	\noindent In Lemma 5.8 \cite{lattice_counting}, Barroero and Widmer obtain constants for each projection $\pi_I$. This constant relies on cell decomposition. For our purposes, we require only a single uniform constant across all projections, $\pi_I$, and apply \#cell-decomposition to obtain such a uniform and effective constant. The following lemma is an effective version of Lemma 5.8 in \cite{lattice_counting}.

	\begin{lemma} \label{lem:sharp_bound_on_Hj}
		Let $S \subseteq \R^{p + n}$, with $S \in \Omega_{\mathcal{F},D}$, be a definable family whose fibres $S_a \in \R^n$ are bounded and of dimension at most $j \geq 1$. Then there exists  $E = \poly_\mathcal{F}(D)$ such that 
		$$ \mathcal{H}^j(S_a) \leq \sum_{|I| = j} E \Vol_j(\pi_I(S_a))$$
		for every $a \in \R^p$.
	\end{lemma}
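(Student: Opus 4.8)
The plan is to reduce this to a statement about a single definable set by fibering over $I$, then apply $\mathcal{H}^j$-rectifiability (Proposition \ref{prop:coincidence_of_dimension}) together with an area-formula style bound on each piece, and use \#cell-decomposition to make the resulting constant uniform in $a$ and independent of $I$. First I would note that by Proposition \ref{prop:coincidence_of_dimension} each bounded fibre $S_a$ of dimension at most $j$ is $j$-Hausdorff measurable with $\mathcal{H}^j(S_a)<\infty$. The geometric input from \cite{lattice_counting} (their Lemma 5.8 and its proof) is that $\mathcal{H}^j(S_a)$ is controlled by a finite sum $\sum_{|I|=j} c_I(a)\,\Vol_j(\pi_I(S_a))$, where the constant $c_I(a)$ comes from: (i) decomposing $S_a$ into finitely many $j$-dimensional $C^1$ cells on each of which some coordinate projection $\pi_I$ is a bi-Lipschitz (indeed $C^1$-diffeomorphic) chart, and (ii) bounding the number of such cells and the number of times $\pi_I$ is generically one-to-one on them. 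The point is that in the o-minimal setting these are the constants Barroero and Widmer obtain non-effectively from ordinary cell decomposition; in the \#o-minimal setting we replace cell decomposition by \#cell-decomposition to get them effectively.

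Concretely, I would form the definable family $S \subseteq \R^{p+n}$ and apply \#cell-decomposition (which holds by our standing assumption, justified by Theorem \ref{thm:presharp_to_sharp}) to decompose $\R^{p+n}$ into $\poly_\mathcal{F}(D)$ cells of format $\bigO_\mathcal{F}(1)$ and degree $\poly_\mathcal{F}(D)$ compatible with $S$. Restricting to the cells contained in $S$, each such cell $C$ has fibres $C_a$ that are cells in $\R^n$ of dimension at most $j$; after possibly permuting coordinates (there are only $\binom{n}{j}=\bigO_n(1)$ choices) each $C_a$ projects diffeomorphically under some $\pi_I$ onto an open cell in $\R^{|I|}$, and on each such piece the $\mathcal{H}^j$-measure is bounded by (number of sheets)$\times \Vol_j(\pi_I(C_a))$. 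The number of cells is $\poly_\mathcal{F}(D)$ and, via \textbf{S1} applied to suitable fibred sets (as in the proof of Lemma \ref{lem:uniform_davenport_constant}, where one builds the family of line-sections and bounds connected components uniformly), the number of sheets of $\pi_I$ over a generic point is $\poly_\mathcal{F}(D)$ uniformly in $a$. Summing over the $\poly_\mathcal{F}(D)$ cells and the $\bigO_n(1)$ index sets $I$, and taking $E$ to be the resulting product, which is again $\poly_\mathcal{F}(D)$, gives
$$\mathcal{H}^j(S_a) \le \sum_{|I|=j} E\,\Vol_j(\pi_I(S_a))$$
for every $a\in\R^p$, since $\Vol_j(\pi_I(C_a)) \le \Vol_j(\pi_I(S_a))$.

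The main obstacle is making the bound on the number of sheets of $\pi_I$ genuinely uniform in the parameter $a$ and effective in $\mathcal{F},D$ simultaneously: one must exhibit a single definable family (over parameters $a$ together with the base point in $\R^{|I|}$) whose fibres are the $\pi_I$-preimages within $S_a$, verify that it lies in $\Omega_{\bigO_\mathcal{F}(1),\poly_\mathcal{F}(D)}$ using closure under projection, complement and fibre products (axioms \textbf{S3}--\textbf{S7}), and then invoke \textbf{S1} to bound the number of connected components of the fibres by $\poly_\mathcal{F}(D)$. The diffeomorphism/chart structure needed to pass from ``finite fibres of $\pi_I$'' to an actual $\mathcal{H}^j$ bound is exactly the $C^1$-cell decomposition content, which in \cite{lattice_counting} is handled via $C^r$-parametrisation; here it is cleaner to lean directly on \#cell-decomposition, as remarked in the paragraph preceding the lemma, so that no separate effective parametrisation statement is required — the measurability and finiteness come from Proposition \ref{prop:coincidence_of_dimension}, and only the counting needs \#o-minimality. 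Routine verifications — the explicit polynomial bookkeeping of formats and degrees through each application of the axioms, and the elementary inequality $\Vol_j(\pi_I(C_a))\le\Vol_j(\pi_I(S_a))$ — I would leave to the reader or relegate to the argument of Lemma \ref{lem:uniform_davenport_constant}, which is structurally identical.
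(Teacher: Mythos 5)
Your overall strategy is the right one and matches the paper's: establish $(\mathcal{H}^j,j)$-rectifiability of $S_a$ via Proposition \ref{prop:coincidence_of_dimension}, control $\mathcal{H}^j(S_a)$ by the multiplicities of the coordinate projections $\pi_I$, build the definable family of $\pi_I$-preimages over the parameters $(a,y)$, and bound its generic fibre cardinality by $E=\poly_\mathcal{F}(D)$ using \#cell-decomposition. The family you describe in your last paragraph is exactly the set $R=\{(a,y,x):(a,x)\in S,\ y=\pi_I(x)\}$ used in the paper, with $|R_{(a,y)}|=N(\pi_I|S_a,y)$.

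However, the concrete implementation you give in the middle has a step that fails as written. You claim that after \#cell-decomposing $S$, each cell fibre $C_a$ projects diffeomorphically under \emph{some single} $\pi_I$, and that on such a piece $\mathcal{H}^j(C_a)\le(\text{number of sheets})\cdot\Vol_j(\pi_I(C_a))$. This inequality goes the wrong way: a $j$-cell is a graph over its base, and for a graph one has $\mathcal{H}^j(C_a)\ge\Vol_j(\pi_I(C_a))$, with the ratio controlled only by the Lipschitz constant of the graphing functions --- which cell decomposition does not bound. No single coordinate projection suffices. The correct route, and the one the paper takes, is the integral-geometric inequality
$$\mathcal{H}^j(S_a)\ \le\ \sum_{|I|=j}\int N(\pi_I|S_a,y)\,d\mathcal{L}^j y,$$
which holds for $(\mathcal{H}^j,j)$-rectifiable sets because, by Cauchy--Binet, at $\mathcal{H}^j$-a.e.\ point the tangent $j$-plane has at least one coordinate projection with Jacobian bounded below by a dimensional constant; the sum over all $I$ is essential. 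Once you have this, your fibre-counting argument applies, but you must also dispose of the set $P\subseteq\R^j$ of base points $y$ over which the fibre $\pi_I^{-1}(y)\cap S_a$ is positive-dimensional (there the cardinality bound is vacuous): since $\dim P\le\dim S_a-1\le j-1$, it is $\mathcal{L}^j$-null, and integrating $N\le E$ over the complement gives $\int N(\pi_I|S_a,y)\,d\mathcal{L}^j y\le E\Vol_j(\pi_I(S_a))$. Your phrase ``over a generic point'' gestures at this but the measure-zero justification needs to be said.
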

	
	\begin{proof}
		If $S = \varnothing$, the claim is true. For $a \in \R^p$, if $\dim(S_a) < 1$, then $\mathcal{H}^j(S_a) = 0$. Thus we assume that $\dim(S_a) > 0$. Therefore by \ref{prop:coincidence_of_dimension}, $S_a$ is $(\mathcal{H}^j,j)$-rectifiable. Hence 
		$$\mathcal{H}^j(S_a) \leq \sum_{|I| = j} \int N(\pi_I| S_a, y)d\mathcal{L}^jy$$
		for every $a \in \R^p$.
		\\\\
		Let
		$$ R = \{(a,y,x) \in \R^{p+j+n}: (a,x) \in S, y = \pi_I(x)\}.$$
		Since the graph of $\pi_I$ is in $\Omega_{\bigO_n(1), \bigO_n(1)}$, it follows that, $R \in \Omega_{\bigO_\mathcal{F}(1), \poly_\mathcal{F}(D)}$.
		\\\\
		For every $(a,y)$ we have $|R_{(a,y)}| = |\pi^{-1}(y) \cap S_a| = N(\pi_I| S_a,y)$. By \#cell decomposition there is some a uniform upper bound $E = \poly_\mathcal{F}(D)$ for the number of connected components of the fibres of $R_{(a,y)}$. That is if $\dim(R_{(a,y)}) = 0$, then $|R_{(a,y)}| \leq E$.
		\\\\
		For fixed $a \in \R^p$. The restriction $\pi_{I|S_a}: S_a \to \R^j$ is a definable map. Thus, we get 
		$$P = \{y \in \R^j: \dim(\pi^{-1}(y)) \cap S_a \geq 1\}$$
		is definable and
		$$\dim(P) \leq \dim S_a - 1 \leq j-1.$$
		Hence $P$ has measure $0$ in $\R^j$. Let $Q= \pi(S_a) \setminus P$. The set $Q$ is definable and is the set of $y$ such that $\dim(R_{(a,y)}) = 0$. Therefore
		$$\int N(\pi_I|S_a,y)d\mathcal{L}^jy = \int_Q|R_{(a,y)}|d\mathcal{L}^jy \leq \int_Q Ed\mathcal{L}^jy = E \Vol_j(\pi_I(S_a)). $$
	\end{proof}
	
	\noindent The following is Lemma 2.4 in \cite{lattice_counting}. The explicit constant is taken directly from Barroero and Widmer's proof of the lemma. We emphasise the explicit nature of the constant to show that for a definable compact set $C$, we have that $\Vol_j(C)$ is at most $ cV^\prime(C)$ for some constant $c = \bigO_{\mathcal{F}}(1)$.
	\begin{lemma}[Lemma 2.4 \cite{lattice_counting}]\label{lem:orthogonal_projection_bound}
		Suppose that $C \subseteq \R^n$ is compact. Then for any $j = 1, \ldots, n-1$ and $I \subseteq \{1, \ldots, n\}$ with $|I| = j$
		$$
		\Vol_j(C^I) \leq \left(j^{3/2}\frac{n!2^n}{B_n}\right)^j V^\prime_j(C).
		$$
	\end{lemma}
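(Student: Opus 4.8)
The plan is to observe that, with $C^I$ denoting — as fixed just before Lemma~\ref{lem:endomorphism_image_bound} — the orthogonal projection of $C$ onto the coordinate subspace $\Sigma_I:=\Vspan\{e_i:i\in I\}$, the asserted estimate is a (very lossy) weakening of the trivial bound $\Vol_j(C^I)\le V'_j(C)$, so that the proof reduces to the defining property of $V'_j$ together with a one-line estimate on the size of the constant.

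First I would dispose of the measurability bookkeeping: $C^I$ is the image of the compact set $C$ under the continuous map $\pi_{\Sigma_I}\colon\R^n\to\Sigma_I$, hence compact, and identifying $\Sigma_I$ with $\R^j$ makes $\Vol_j(C^I)=\mathcal{L}^j(C^I)<\infty$ well defined; the same applies to $\pi_X(C)$ for every $j$-dimensional linear subspace $X\subseteq\R^n$, so $V'_j(C)=\sup_X\Vol_j(\pi_X(C))$ is a well-defined finite quantity. The crux is then immediate: since $|I|=j$, the coordinate subspace $\Sigma_I$ is itself one of the $j$-dimensional linear subspaces of $\R^n$, so $C^I=\pi_{\Sigma_I}(C)$ is among the sets whose volumes are maximised in the definition of $V'_j(C)$, whence $\Vol_j(C^I)\le V'_j(C)$. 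Finally, to recover the displayed inequality with its explicit constant I would note that $\bigl(j^{3/2}\tfrac{n!\,2^n}{B_n}\bigr)^j\ge 1$: the range $j=1,\dots,n-1$ is empty unless $n\ge 2$, and for $n\ge 2$ one has $j^{3/2}\ge 1$ and $n!\,2^n\ge 8>\sup_m B_m\ge B_n$, so the coefficient is a product of factors at least $1$; composing with the previous inequality gives $\Vol_j(C^I)\le V'_j(C)\le\bigl(j^{3/2}\tfrac{n!\,2^n}{B_n}\bigr)^j V'_j(C)$.

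I therefore do not expect a genuine obstacle in this direction; the only point that needs even a moment's care is the measurability remark, which is routine. The reason for carrying the precise constant verbatim from Barroero--Widmer is that it depends on $n$ alone (and $n\le\mathcal{F}$ by axiom \textbf{S2}), hence is $\bigO_{\mathcal{F}}(1)$; this is exactly what is used downstream, where Lemma~\ref{lem:orthogonal_projection_bound} is combined with Lemma~\ref{lem:sharp_bound_on_Hj} to bound $\mathcal{H}^j(S_a)$ by a $\bigO_{\mathcal{F}}(1)$-multiple of $V'_j(S_a)$. In \cite{lattice_counting} this same constant arises from a more refined, two-sided comparison between $V'_j$ and the coordinate projections (the harder half bounding an arbitrary $j$-plane projection by the largest $\Vol_j(C^I)$, via Cauchy--Binet and Minkowski's second theorem), but only the easy direction above enters the present argument.
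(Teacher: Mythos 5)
There is a genuine gap, and it lies in your identification of $C^I$. In this lemma (quoted verbatim from Barroero--Widmer, Lemma 2.4, for which the paper gives no proof of its own) the set $C^I$ is the one fixed in the Notation block preceding Lemma \ref{lem:endomorphism_image_bound}: $C^I=\{x\in\Sigma_I : x+y\in C \text{ for some } y\in\Sigma'_I\}$, where $\Sigma_I=\Vspan\{v_i:i\in I\}$ and $\Sigma'_I=\Vspan\{v_i:i\notin I\}$ are spanned by a \emph{reduced lattice basis} $v_1,\dots,v_n$ of $\Lambda$ with $|v_i|\le i\lambda_i$ --- not by the standard basis vectors $e_i$. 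Thus $C^I$ is the image of $C$ under an \emph{oblique} projection onto $\Sigma_I$ along $\Sigma'_I$, and it is not among the orthogonal projections over which $V'_j(C)$ takes its supremum. The inequality $\Vol_j(C^I)\le V'_j(C)$ is therefore not the ``trivial direction'': an oblique projection can inflate $j$-dimensional volume beyond every orthogonal shadow (picture a thin slab in $\R^2$ projected onto a line along a direction nearly parallel to the slab). The entire content of the lemma is the constant, which quantifies the worst-case distortion of this oblique projection; the proof in \cite{lattice_counting} compares it with the orthogonal projection onto $\Sigma_I$ using $|v_i|\le i\lambda_i$ together with Minkowski's second theorem $\lambda_1\cdots\lambda_n\le 2^n\det(\Lambda)/B_n$, which is exactly where the factors $j^{3/2}$, $n!$, $2^n$ and $B_n$ come from. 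Your reduction to $\Vol_j(C^I)\le V'_j(C)$ plus ``the constant is at least $1$'' proves a different, and in this context vacuous, statement.

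Part of the confusion is the paper's fault: the parenthetical gloss inside the statement of Lemma \ref{lem:endomorphism_image_bound} describes $C^I$ as ``the orthogonal projection of $C$ to the subspace spanned by $\{e_i: i\in I\}$'', contradicting both the Notation block and the source. A sanity check that this gloss cannot be the intended meaning: if it were, summing Lemma \ref{lem:endomorphism_image_bound} over $I$ would give $V_j(\Psi(C))\le \tfrac{2^j}{B_j}\tfrac{V_j(C)}{\lambda_1\cdots\lambda_j}$ directly, and Lemma \ref{lem:orthogonal_projection_bound}, Lemma \ref{lem:sharp_bound_on_Hj}, Lemma \ref{lem:auxilliary_set} and Proposition \ref{prop:bound_V_prime} would all be superfluous in the proof of Theorem \ref{thm:sharp_lattice_count}. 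The one part of your write-up that survives, and that the paper actually relies on, is the observation that the constant depends only on $n\le\mathcal{F}$ and is hence $\bigO_{\mathcal{F}}(1)$.
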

	
	\begin{lemma}[Lemma 5.1 \cite{lattice_counting}] \label{lem:measure_of_image_under_endomorphism}
		Suppose $A \subseteq \R^n$ for $1 \leq j \leq n$ and suppose that $A$ is $j$-Hausdorff measurable. Furthermore, let $\psi: \R^n \to \R^n$ be an endomorphism. Then $\mathcal{H}^j(\Psi(A)) \leq ||\Psi||^j_{\textnormal{op}} \mathcal{H}^j(A)$. Moreover if $\Psi$ is an orthogonal projection, then $\mathcal{H}^j(\Psi(A)) \leq \mathcal{H}^j(A)$ and if $\Psi \in O_n(\R)$ then $\mathcal{H}^j(\Psi(A)) \leq \mathcal{H}^j(A)$. Here $||\Psi||^j_{\textnormal{op}}$ is the operator norm on $\Psi$, that is $||\Psi||^j_{\textnormal{op}} = \inf\{c \in \R: ||\Psi(v)|| < c \textnormal{ for all } v \in \R^n\}$.
	\end{lemma}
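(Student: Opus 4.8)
The plan is to reduce the statement entirely to the standard behaviour of $j$-dimensional Hausdorff measure under Lipschitz maps; this is Lemma 5.1 of \cite{lattice_counting}, so I would only sketch the argument. First I would establish the basic estimate: if $f\colon \R^n \to \R^n$ is $L$-Lipschitz, then $\mathcal{H}^j(f(B)) \leq L^j \mathcal{H}^j(B)$ for every $B \subseteq \R^n$. Indeed, given $\delta > 0$ and a countable cover $\{U_i\}$ of $B$ with $\diam(U_i) \leq \delta$, the images $\{f(U_i)\}$ cover $f(B)$ and satisfy $\diam(f(U_i)) \leq L\,\diam(U_i) \leq L\delta$, whence $\sum_i \diam(f(U_i))^j \leq L^j \sum_i \diam(U_i)^j$; taking the infimum over all such covers of $B$ and letting $\delta \to 0^+$ gives the claim, the normalising constant appearing in the definition of $\mathcal{H}^j$ cancelling on the two sides.

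Next I would apply this to a linear endomorphism $\Psi$ of $\R^n$, which is $\|\Psi\|_{\textnormal{op}}$-Lipschitz since $\|\Psi(v) - \Psi(w)\| = \|\Psi(v-w)\| \leq \|\Psi\|_{\textnormal{op}}\|v-w\|$; this gives $\mathcal{H}^j(\Psi(A)) \leq \|\Psi\|^j_{\textnormal{op}} \mathcal{H}^j(A)$. For the two special cases: an orthogonal projection onto a linear subspace satisfies $\|\Psi(v)\| \leq \|v\|$ for all $v$ by Pythagoras, so $\|\Psi\|_{\textnormal{op}} \leq 1$, and an element of $O_n(\R)$ is an isometry, so $\|\Psi\|_{\textnormal{op}} = 1$; in either case the bound specialises to $\mathcal{H}^j(\Psi(A)) \leq \mathcal{H}^j(A)$. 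For the measurability assertion I would note that when $\Psi$ is invertible it is a homeomorphism, hence preserves both the Borel $\sigma$-algebra and the class of $\mathcal{H}^j$-null sets, so $\Psi(A)$ is again $\mathcal{H}^j$-measurable; and in the applications made below $A$ is definable, so $\Psi(A)$ is definable and in particular Borel, making the point moot.

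I do not expect any genuine obstacle here: the only points needing care are the bookkeeping with the normalising constant in the definition of $\mathcal{H}^j$ and the (routine) measurability remark, and in any case the full statement is recorded as Lemma 5.1 of \cite{lattice_counting}.
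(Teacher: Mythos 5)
Your argument is correct and is the standard one: the paper itself does not prove this lemma but simply imports it as Lemma 5.1 of the cited Barroero--Widmer paper, whose proof is exactly the Lipschitz covering estimate $\mathcal{H}^j(f(B)) \leq L^j\mathcal{H}^j(B)$ specialised to linear maps, orthogonal projections ($\|\Psi\|_{\textnormal{op}} \leq 1$) and orthogonal matrices ($\|\Psi\|_{\textnormal{op}} = 1$). Nothing further is needed.
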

	
	\noindent In Lemmas 6.1 and 6.2 in \cite{lattice_counting}, Barroero and Widmer construct an auxiliary set, $Z^\prime$, in order to bound $V_j^\prime(Z_T)$ by $cV_j(Z_T)$ for some constant $c$. We adapt Lemma 6.2 \cite{lattice_counting} to show that in the \#o-minimal setting the set $Z^\prime$ has format and degree effectively bounded in terms of the format and degree of $Z$ and thus the constant obtained in Lemma 6.2 of \cite{lattice_counting} is effective. The following two lemmas are our analogues of Lemmas 6.2 and 6.1 in \cite{lattice_counting} respectively.
	
	\begin{lemma}\label{lem:auxilliary_set}
		Let $Z \subseteq \R^{m + n}$, with $Z \in \Omega_{\mathcal{F}, D}$ be a definable family such that the fibres $Z_T$ are bounded. Let $j \in \{1, \ldots, n\}$. There exists some definable $Z^\prime \subseteq \R^{n^2 + m + n}$, with $Z^\prime \in \Omega_{\bigO_\mathcal{F}(1), \poly_\mathcal{F}(D)}$, such that 
		\begin{itemize}
			\item $\dim(Z^\prime_{(\Phi, T)}) \leq j$,
			\item $Z^\prime_{(\Phi, T)} \subseteq Z_T$ for all $(\Phi, T) \in \R^{n^2 + m}$, and
			\item $V^\prime_j(Z_T) \leq \sup_{\Phi \in O_n(\R)}\mathcal{H}^j(Z^\prime_{(\Phi,T)})$
			for every $T\in \R$.
		\end{itemize} 
		
	\end{lemma}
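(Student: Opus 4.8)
The plan is to construct $Z^\prime$ by using sharp definable choice (Proposition \ref{prop:sharp_definable_choice}) to lift, uniformly in the subspace, the orthogonal projection of $Z_T$ onto a $j$-dimensional subspace back to a $j$-dimensional subset of $Z_T$ itself. What makes this uniform and of bounded degree is that orthogonal projection of $\R^n$ onto a $j$-dimensional subspace is a polynomial map of bounded degree in an auxiliary parameter. Let $D_j := \operatorname{diag}(1,\dots,1,0,\dots,0)\in\R^{n^2}$ with $j$ ones, and, identifying $\Phi\in\R^{n^2}$ with an $n\times n$ matrix, put $P_\Phi := \Phi\, D_j\, \Phi^{\top}$. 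For every $\Phi$ one has $\operatorname{rank} P_\Phi\le j$, and when $\Phi\in O_n(\R)$ the map $P_\Phi$ is exactly the orthogonal projection of $\R^n$ onto $X_\Phi := \Phi(\R^j\times\{0\}^{n-j})$; conversely every $j$-dimensional linear subspace of $\R^n$ equals $X_\Phi$ for some $\Phi\in O_n(\R)$.

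First I would assemble the relevant definable sets. Each of the $n$ scalar equations in $P_\Phi x = w$ is a polynomial of degree $3$ in $(\Phi,x,w)$, so by \textbf{S7} and \textbf{S6} the set $\{(\Phi,x,w):P_\Phi x = w\}$ lies in $\Omega_{\bigO_\mathcal{F}(1),\bigO_\mathcal{F}(1)}$ (here $\mathcal{F}\ge m+n\ge n$ by \textbf{S2}); inserting a free $\R^m$-factor with \textbf{S3} and permuting coordinates, so does $\{(\Phi,T,w,x):P_\Phi x = w\}$. Intersecting with the coordinate-permuted, padded copy $\{(\Phi,T,w,x):(T,x)\in Z\}\in\Omega_{\bigO_\mathcal{F}(1),D}$ gives, by \textbf{S6},
$$ W := \{(\Phi,T,w,x)\in\R^{n^2}\times\R^m\times\R^n\times\R^n : x\in Z_T,\ P_\Phi x = w\}\in\Omega_{\bigO_\mathcal{F}(1),\poly_\mathcal{F}(D)}. $$
Regard $W$ as a family with parameters $(\Phi,T,w)$ and fibre variable $x$; its fibre over $(\Phi,T,w)$ is $\{x\in Z_T:P_\Phi x = w\}$, non-empty exactly when $w\in P_\Phi(Z_T)$. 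Proposition \ref{prop:sharp_definable_choice} then yields a definable $f$ on the projection $\Pi$ of $W$ to its $(\Phi,T,w)$-coordinates, with $\Gamma_f\in\Omega_{\bigO_\mathcal{F}(1),\poly_\mathcal{F}(D)}$, such that $f(\Phi,T,w)\in Z_T$ and $P_\Phi(f(\Phi,T,w))=w$ for all $(\Phi,T,w)\in\Pi$. I would then set $Z^\prime$ to be the image of $\Gamma_f$ under the coordinate projection forgetting the $w$-block:
$$ Z^\prime := \{(\Phi,T,x):\exists w\ \big((\Phi,T,w)\in\Pi\ \text{and}\ x = f(\Phi,T,w)\big)\}\in\Omega_{\bigO_\mathcal{F}(1),\poly_\mathcal{F}(D)} $$
by \textbf{S4}. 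Writing $s_{\Phi,T}(w):=f(\Phi,T,w)$, one has $Z^\prime_{(\Phi,T)}=s_{\Phi,T}(P_\Phi(Z_T))$, and $P_\Phi$ restricts to a bijection of $Z^\prime_{(\Phi,T)}$ onto $P_\Phi(Z_T)$.

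It then remains to check the three bullets. For the first, $P_\Phi(Z_T)\subseteq P_\Phi(\R^n)$ has dimension $\le\operatorname{rank} P_\Phi\le j$, and o-minimal dimension does not increase under the definable map $s_{\Phi,T}$, so $\dim Z^\prime_{(\Phi,T)}\le j$. The second is immediate, since every element of $Z^\prime_{(\Phi,T)}$ has the form $f(\Phi,T,w)\in Z_T$. For the third, fix $T$ and $\Phi\in O_n(\R)$; then $P_\Phi$ is the orthogonal projection onto the $j$-plane $X_\Phi$, and $P_\Phi(Z^\prime_{(\Phi,T)})=P_\Phi(Z_T)$ is the orthogonal projection of $Z_T$ onto $X_\Phi$. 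As $Z^\prime_{(\Phi,T)}$ is bounded of dimension $\le j$ it is $j$-Hausdorff measurable by Proposition \ref{prop:coincidence_of_dimension}, so Lemma \ref{lem:measure_of_image_under_endomorphism} (orthogonal projections do not increase $\mathcal{H}^j$) gives $\mathcal{H}^j(P_\Phi(Z_T))\le\mathcal{H}^j(Z^\prime_{(\Phi,T)})$; and since $\mathcal{H}^j$ restricted to the $j$-dimensional subspace $X_\Phi$ is ordinary $j$-dimensional volume, the left side is $\Vol_j(P_\Phi(Z_T))$. Taking the supremum over $\Phi\in O_n(\R)$ and recalling that every $j$-dimensional linear subspace of $\R^n$ is some $X_\Phi$, the supremum of the left sides equals $V^\prime_j(Z_T)$, so $V^\prime_j(Z_T)\le\sup_{\Phi\in O_n(\R)}\mathcal{H}^j(Z^\prime_{(\Phi,T)})$, as required.

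I expect the real work to be the format–degree bookkeeping: verifying that the $n$ cubic equations defining $P_\Phi x = w$, their intersection with a permuted, padded copy of $Z$, the application of sharp definable choice, and the final coordinate projection each inflate the format only by $\bigO_\mathcal{F}(1)$ and the degree only by $\poly_\mathcal{F}(D)$. The remaining ingredients are soft: the linear-algebra fact that $P_\Phi=\Phi D_j\Phi^{\top}$ is an orthogonal projection of rank $j$ precisely when $\Phi\in O_n(\R)$, and the standard fact that $\mathcal{H}^j$ on a $j$-dimensional affine subspace of $\R^n$ coincides with $j$-dimensional volume, which is what lets Lemma \ref{lem:measure_of_image_under_endomorphism} be read as a statement about $\Vol_j$. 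The only mild subtlety is to invoke definable choice on the correct domain $\Pi$, so that the empty fibres of $W$ cause no trouble.
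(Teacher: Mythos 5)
Your proof is correct and follows essentially the same strategy as the paper's: parametrize the $j$-dimensional subspaces of $\R^n$ by $O_n(\R)$, use sharp definable choice (Proposition \ref{prop:sharp_definable_choice}) to produce a definable section of the orthogonal projection restricted to $Z_T$ with format $\bigO_\mathcal{F}(1)$ and degree $\poly_\mathcal{F}(D)$, and deduce the three bullets from Proposition \ref{prop:coincidence_of_dimension} and Lemma \ref{lem:measure_of_image_under_endomorphism}. The only difference is one of implementation: the paper conjugates each projection to the fixed coordinate projection onto $\mathrm{span}(e_1,\dots,e_j)$ by first forming $\Phi(Z_T)$ and then rotating the chosen section back into $Z_T$ via $\Phi^{-1}$, whereas you encode the projections directly as the cubic polynomial family $P_\Phi=\Phi D_j\Phi^{\top}$; the format--degree bookkeeping is of the same character either way.
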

	\begin{proof}
		Let $S = \{(\Phi, T,x) \in \R^{n^2 + m + n} : \Phi \in O_n(\R), y \in \Phi(Z_T)\}$ be $W$ from \eqref{eqn:W} in the proof of Lemma \ref{lem:uniform_davenport_constant} intersected with $O_n(\R) \times \R^{m+n}$. It follows that since $O_n(\R)$ is defined by $n$ degree $2$ polynomial equations $O_n(\R) \times \R^{m+n} \in \Omega_{\bigO_n(1), \bigO_n(1)}$ and therefore $S \in \Omega_{\bigO_\mathcal{F}(1),\poly_\mathcal{F}(D)}$. Further, note that $S_{(\Phi, T)} = Z_T$ for every $(\Phi, T) \in O_n \times \R^m$.
		\\\\
		Let $\pi: \R^{n^2 + m +n} \to \R^{n^2 + m + j}$ be the projection that forgets the last $n-j$ coordinates. By Proposition \ref{prop:sharp_definable_choice} there exists some definable function $f: \pi(S) \to \R^{n-j}$ such that the graph of $f$, denoted $\Gamma(f)$,  is a subset of $S$ and $\Gamma(f) \in \Omega_{\bigO_\mathcal{F}(1), \poly_\mathcal{F}(D)}$.
		\\\\
		We claim that $\dim(\pi(S)_{(\Psi, T)}) = \dim(\Gamma(f)_{(\Psi, T)})$. Let $F: \pi(S) \to \Gamma(f)$ be the map $(\Psi, T, x) \mapsto (\Psi, T, z, f(\Psi, T, z))$. Note that by construction $F$ is a bijection and is the inverse of $\pi|_{\Gamma(f)}$. For fixed $(\Psi, T)$, the bijection $F$ induces a bijection $x \mapsto f(\Psi, T, x)$ from $\pi(S)_{(\Psi, T)}$ to $\Gamma(f)_{(\Psi, T)}$. Thus the claim holds.
		\\\\
		Let 
		$$Z^\prime := \{(\Psi, T, x) \in \R^{n^2 + m + n}: \Psi \in O_n(\R), \Psi(x) \in \Gamma(f)_{(\Psi,T)}\}.$$
		The set $Z^\prime$ can be written a projection of the intersection of $\Gamma(f)\times \R^n$ and $\{(\Psi, T, y , x) \in \R^{n^2 +m + n + n}: \Psi(y) = x\} \in \Omega_{\bigO_n(1), \bigO_n(1)}$. Thus $Z^\prime \in \Omega_{\bigO_\mathcal{F}(1), \poly_\mathcal{F}(D)}$.
		\\\\
		We have that 
		$$ \Psi(Z^\prime_{(\Psi,T)}) = \Gamma(f)_{(\Psi,T)}$$
		for every $(\Psi, T) \in O_n(\R) \times \R^m$. If $\Psi \in \R^{n^2} \setminus O_n(\R)$, we have $Z^\prime_{(\Psi, T)} = \varnothing$. Therefore $\dim(Z^\prime_{(\Phi, T)}) \leq j$ and $Z^\prime_{(\Psi, T)} \subseteq Z_T$.
		\\\\
		We now need only show that 
		$$ V^\prime_j(Z_T) \leq \sup_{\Phi \in O_n(\R)}\mathcal{H}^j(Z^\prime_{(\Phi,T)}).$$
		Recall that $V^\prime_j(Z_T)$ is the supremum of the volumes of the orthogonal projections of $Z_T$ onto a $j$-dimensional subspace. Let $\Sigma$ be a $j$-dimensional subspace of $\R^n$ and let $\pi_\Sigma$ be the orthogonal projection of $\R^n$ onto $\Sigma$. Consider the subspace of $\R^n$ spanned by the vectors $\{e_1, \ldots, e_j\}$ and let $\tilde{\pi}$ be the orthogonal projection onto this subspace. By taking some $\Psi \in O_n(\R)$ such that $\Psi$ maps an orthonormal basis for $\Sigma$ to the set $\{e_1, \ldots, e_j\}$ we obtain $\Psi \circ \pi_\Sigma = \tilde{\pi} \circ \Psi$. That is, by Lemma \ref{lem:measure_of_image_under_endomorphism}, 
		$$
		\Vol_j(\pi_\Sigma(Z_T)) = \Vol_j(\Psi(\pi_\Sigma(Z_T))) = \Vol_j(\tilde{\pi}(\Psi(Z_T))) = \Vol_j(\tilde{\pi}(S_{(\Psi, T)})).
		$$
		Thus
		$$
		V^\prime_j(Z_T) = \sup_\Sigma \Vol_j(\pi_\Sigma(Z_T)) \leq \sup_{\Psi \in O_n(\R)} \Vol_j(\tilde{\pi}(S_{(\Psi, T)})).
		$$
		Further, note that from the definition of $\tilde{\pi}$, the image $\tilde{\pi}(S_{(\Psi, T)}) = \pi(S)_{(\Psi, T)}$.
		Thus we have that 
		$$
		\tilde{\pi}(S_{(\Psi, T)}) = \tilde{\pi}(\Gamma(f)_{(\Psi, T)}).
		$$
		Thus by Lemma \ref{lem:measure_of_image_under_endomorphism}, we obtain
		$$
		\Vol_j(\tilde{\pi}(S_{(\Psi, T)}) = \mathcal{H}^j(\tilde{\pi}(S_{(\Psi, T)})) \leq \mathcal{H}^j(\Gamma(f)_{(\Psi, T)}).
		$$
		But we have that, by Lemma \ref{lem:measure_of_image_under_endomorphism},
		$$
		\mathcal{H}^j(\Gamma(f)_{(\Psi, T)}) = \mathcal{H}^j(Z^\prime_{(\Psi,T)})
		$$
		for every $(\Psi, T) \in O_n(\R) \times \R^m$.
	\end{proof}
	
	\begin{proposition}\label{prop:bound_V_prime}
		Let $Z \subseteq \R^{m+n}$ be a definable family such that $Z \in \Omega_{\mathcal{F},D}$ and the fibres $Z_T$ are bounded. Let $j \in \N$, such that $j \leq n - 1$. Then there exists a constant $K = \poly_\mathcal{F}(D)$ such that for all $T \in \R^m$
		$$
		V^\prime_j(\cl(Z_T)) \leq K V_j(Z_T).
		$$
	\end{proposition}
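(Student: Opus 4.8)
The plan is to assemble Lemmas \ref{lem:auxilliary_set} and \ref{lem:sharp_bound_on_Hj}, the key point being that the constant produced by the latter may be taken independent of the orthogonal parameter introduced by the former, so that the supremum over $O_n(\R)$ coming out of Lemma \ref{lem:auxilliary_set} does no harm. We may assume $1\le j\le n-1$; the case $j=0$ is immediate from the conventions.

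First I would pass to the family $\widehat Z\subseteq\R^{m+n}$ with fibres $\widehat Z_T=\cl(Z_T)$. By Lemma \ref{lem:sharp_family_closure}(3) we have $\widehat Z\in\Omega_{\bigO_\mathcal{F}(1),\poly_\mathcal{F}(D)}$, and its fibres are still bounded (indeed compact). Applying Lemma \ref{lem:auxilliary_set} to $\widehat Z$ yields a definable family $Z^\prime\subseteq\R^{n^2+m+n}$ with $Z^\prime\in\Omega_{\bigO_\mathcal{F}(1),\poly_\mathcal{F}(D)}$ such that $\dim Z^\prime_{(\Phi,T)}\le j$ and $Z^\prime_{(\Phi,T)}\subseteq\cl(Z_T)$ for every $(\Phi,T)\in\R^{n^2+m}$, and
$$V^\prime_j(\cl(Z_T))\le\sup_{\Phi\in O_n(\R)}\mathcal{H}^j\big(Z^\prime_{(\Phi,T)}\big)$$
for every $T$. (Equivalently one could apply Lemma \ref{lem:auxilliary_set} to $Z$ itself and use $V^\prime_j(\cl(Z_T))=V^\prime_j(Z_T)$, which follows from Lemma \ref{lem:volume_of_boundary}.)

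Next I would regard $Z^\prime$ as a definable family over the parameter space $\R^{n^2+m}$ and invoke Lemma \ref{lem:sharp_bound_on_Hj}: its fibres are bounded (being subsets of the compact sets $\cl(Z_T)$) and of dimension at most $j$, so there is a single constant $E=\poly_\mathcal{F}(D)$, independent of $(\Phi,T)$, with
$$\mathcal{H}^j\big(Z^\prime_{(\Phi,T)}\big)\le\sum_{|I|=j}E\,\Vol_j\big(\pi_I(Z^\prime_{(\Phi,T)})\big).$$
Since $Z^\prime_{(\Phi,T)}\subseteq\cl(Z_T)$ we have $\pi_I(Z^\prime_{(\Phi,T)})\subseteq\pi_I(\cl(Z_T))$, and both are bounded definable subsets of a coordinate subspace, hence Lebesgue measurable; by monotonicity $\Vol_j(\pi_I(Z^\prime_{(\Phi,T)}))\le\Vol_j(\pi_I(\cl(Z_T)))$. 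Summing over the $j$-element $I\subseteq\{1,\dots,n\}$ gives $\sum_{|I|=j}\Vol_j(\pi_I(\cl(Z_T)))=V_j(\cl(Z_T))=V_j(Z_T)$, the last equality being Lemma \ref{lem:Vj_of_closure}. Chaining the three displays, and noting that the final bound no longer involves $\Phi$ so the supremum is harmless, we obtain $V^\prime_j(\cl(Z_T))\le E\,V_j(Z_T)$; thus $K=E=\poly_\mathcal{F}(D)$ works.

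There is no substantial obstacle here beyond bookkeeping. One must check that the successive invocations of Lemmas \ref{lem:sharp_family_closure}, \ref{lem:auxilliary_set} and \ref{lem:sharp_bound_on_Hj} keep the format in $\bigO_\mathcal{F}(1)$ and the degree in $\poly_\mathcal{F}(D)$, which holds because these two classes are stable under composition, and one must be mildly careful with the elementary measure theory (a coordinate projection of a bounded definable set is bounded, definable, and measurable). The one conceptual point worth flagging is that effectivity rests on Lemma \ref{lem:sharp_bound_on_Hj} producing a single constant $E$ valid for all fibres simultaneously, in particular uniformly over $\Phi\in O_n(\R)$, so that the orthogonal‑transformation supremum supplied by Lemma \ref{lem:auxilliary_set} does not degrade the bound.
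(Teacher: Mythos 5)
Your proof is correct and follows essentially the same route as the paper's: combine Lemma \ref{lem:auxilliary_set} with the uniform constant from Lemma \ref{lem:sharp_bound_on_Hj} and the identity $V_j(\cl(Z_T))=V_j(Z_T)$ from Lemma \ref{lem:Vj_of_closure}, noting that the bound is independent of $\Phi$ so the supremum over $O_n(\R)$ is harmless. If anything you are slightly more careful than the paper, which applies the auxiliary-set lemma to $Z$ itself and silently identifies $V^\prime_j(\cl(Z_T))$ with $V^\prime_j(Z_T)$, whereas you either work with the closure family directly or justify that identification; the resulting constants differ only by an inessential binomial factor.
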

	
	\begin{proof}
		By Lemma \ref{lem:Vj_of_closure} we have that $V_j(Z_T) = V_j(\cl(Z_T))$. Let $Z^\prime$ be as in Lemma \ref{lem:auxilliary_set}. By Lemma \ref{lem:sharp_bound_on_Hj} there exists some $E = \poly_\mathcal{F}(D)$ such that 
		$$
		\mathcal{H}^j(Z^\prime_{(\Psi, T)}) \leq \sum_{|I| = j} E\Vol_j(\pi_I(Z^\prime_{(\Psi, T)}))
		$$
		for every $(\Psi, T) \in \R^{n^2 + m}$.
		\\\\
		Let $\pi_{C^I}$ be the orthogonal projection map from $\R^n$ onto the subspace $C^I$ spanned by $\{e_i: i \in I\}$. We have
		$$
		\Vol_j(\pi_I(Z^\prime_{(\Psi, T)})) = \Vol_j(\pi_{C^I}(Z^\prime_{(\Psi, T)}))
		$$
		for every $(\Psi, T)\in \R^{n^2 + m}$. Therefore, since $Z^\prime_{(\Psi, T)} \subseteq Z_T$, 
		$$
		\mathcal{H}^j(Z^\prime_{(\Psi, T)}) \leq \sum_{|I|=j} E \Vol_j(\pi_{C^I}(Z^\prime_{(\Psi, T)})) \leq K V_j(Z^\prime_{(\Psi, T)}) \leq K V_j(Z_T),
		$$
		where $K = \max_{ k \leq \mathcal{F}}\max_j {k \choose j} E = \poly_{\mathcal{F}}(D)$. 
		\\\\
		By Lemma \ref{lem:auxilliary_set}, 
		$$
		V^\prime_j(Z_T) \leq \sup_{\Phi \in O_n(\R)} \mathcal{H}^j(Z^\prime_{(\Phi, T)}) \leq  K V_j(Z_T).
		$$
	\end{proof}
	
	\subsection{Proof of Theorem \ref{thm:sharp_lattice_count}}
	\begin{proof}
		We consider the set $C = \{(T,x) \in \R^{n+m}: x \in \cl(Z_T)\}$. By Lemma \ref{lem:sharp_family_closure}, $U \in \Omega_{\bigO_\mathcal{F}(1), \poly_\mathcal{F}(D)}$. By Lemma \ref{lem:uniform_davenport_constant} there is some $M = \poly_\mathcal{F}(D)$ such that for all $T \in \R^m$ and endomorphisms $\Psi$, such that $|C_T \cap \Lambda| = |\Psi(C_T) \cap \Z^n|$,
		$$
		\left| \left| C_T \cap \Lambda \right| - \frac{\Vol(C_T)}{\det(\Lambda)} \right| \leq \sum^{n-1}_{j=0} M^{n-j}V_j(\Psi(C_T)).
		$$
		By Lemma \ref{lem:endomorphism_image_bound}, Lemma \ref{lem:orthogonal_projection_bound}, and Proposition \ref{prop:bound_V_prime} we have
		\begin{align*}
			\sum^{n-1}_{j=0} M^{n-j}V_j(\Psi(C_T)) &\leq  \sum^{n-1}_{j=0} M^{n-j}\frac{2^j}{B_j \lambda_1 \ldots \lambda_j}\sum_{|I| = j}\Vol_j(C_T^I)\\
			&\leq \sum^{n-1}_{j=0} M^{n-j}\frac{2^j}{B_j \lambda_1 \ldots \lambda_j}{n \choose j}\left(j^{3/2}\frac{n!2^n}{B_n}\right)^j V^\prime_j(C_T)\\
			&\leq \sum^{n-1}_{j=0} M^{n-j}\frac{2^j}{B_j \lambda_1 \ldots \lambda_j}{n \choose j}\left(j^{3/2}\frac{n!2^n}{B_n}\right)^j K V_j(C_T).
		\end{align*}
		
		Let
		$$
		c_C = \max_{k \leq \mathcal{F}} \max_{ j \leq k} M^{k-j}K\frac{2^j}{B_j}{k \choose j}\left(j^{3/2}\frac{k!2^k}{B_k}\right)^j.
		$$
		Note that $c_C = \poly_{\mathcal{F}}(D)$.
		By Lemma \ref{lem:volume_of_boundary} we have that $\Vol(\cl(Z_T)) = \Vol(Z_T)$ and by Lemma \ref{lem:Vj_of_closure}, $V_J(\cl(Z_T)) = V_j(Z_T)$. Thus we have
		$$
		\left| \left|\cl(Z_T) \cap \Lambda \right| - \frac{\Vol(Z_T)}{\det(\Lambda)} \right| \leq c_C \sum^{n-1}_{j=0} \frac{V_j(Z_T)}{\lambda_1 \ldots \lambda_j}
		$$
		and $c_C = \poly_\mathcal{F}(D)$.
		Similarly, for the set $P = \{(T,x) \in \R^{m+n}: x \in \partial(Z_T)\}$ we have a constant $c_P = \poly_\mathcal{F}(D)$ such that 
		$$
		\left| \left|\partial(Z_T) \cap \Lambda \right| - \frac{\Vol(\partial(Z_T))}{\det(\Lambda)} \right| \leq c_P \sum^{n-1}_{j=0} \frac{V_j(\partial(Z_T))}{\lambda_1 \ldots \lambda_j}.
		$$
		Since $\Vol(\partial(Z_T)) = 0$ and $\partial(Z_T) \subseteq \cl(Z_T)$ and thus $V_J(\partial(Z_T)) \leq V_j(\cl(Z_T)) = V_j(Z_T)$, we have
		$$
		\left|\partial(Z_T) \cap \Lambda \right| \leq c_P \sum^{n-1}_{j=0} \frac{V_j(Z_T)}{\lambda_1 \ldots \lambda_j}.
		$$
		Now 
		$$
		||Z_T \cap \Lambda| - |\cl(Z_T) \cap \Lambda|| \leq |\partial(Z_T) \cap \Lambda| \leq c_P \sum^{n-1}_{j=0} \frac{V_j(Z_T)}{\lambda_1 \ldots \lambda_j}.
		$$
		Therefore 
		\begin{align*}
			\left|\left|Z_T \cap \Lambda \right| - \frac{\Vol(Z_T)}{\det(\Lambda)} \right| &= \left|\left|Z_T \cap \Lambda \right| - |\cl(Z_T) \cap \Lambda| + |\cl(Z_T) \cap \Lambda|- \frac{\Vol(Z_T)}{\det(\Lambda)} \right|\\
			&\leq \left| \left|Z_T \cap \Lambda \right| - \left|\cl(Z_T) \cap \Lambda\right|\right| +  \left| \left|\cl(Z_T) \cap \Lambda \right| - \frac{\Vol(Z_T)}{\det(\Lambda)} \right|\\
			&\leq c_P \sum^{n-1}_{j=0} \frac{V_j(Z_T)}{\lambda_1 \ldots \lambda_j} + c_C \sum^{n-1}_{j=0} \frac{V_j(Z_T)}{\lambda_1 \ldots \lambda_j}\\
			&= c \sum^{n-1}_{j=0} \frac{V_j(Z_T)}{\lambda_1 \ldots \lambda_j},
		\end{align*}
		where $c = c_C + c_P = \poly_\mathcal{F}(D)$.
	\end{proof}
	
	\section{Effective Lattice Point Counting in $\R_{\exp}$}\label{sec: effec LPC Rexp}
	
	\subsection{Restricted Sub-Pfaffian Sets}
	In order to prove Theorem \ref{thm:Rexp_lattice_count}, we introduce the \#o-minimal structure $\R_{\text{rPfaff}}$ of restricted sub-Pfaffian sets. We then show that Theorem \ref{thm:presharp_to_sharp} applies uniformly to restrictions of existentially definable sets in $\R_{\exp}$. Since we require that the fibres $Z_T$ in Theorems \ref{thm:sharp_lattice_count} and \ref{thm:Rexp_lattice_count} are bounded, we can lift the bound obtained on restrictions of existentially definable sets to the full $\R_{\exp}$-definable set. This extension idea is used by Jones and Thomas in \cite{effective_pila_wilkie_pfaffian} to similarly extend a point counting result from restricted sub-Pfaffian sets to unrestricted sub-Pfaffian sets.
	
	\begin{definition}\label{def:pfaffian_function}
		Let $U\subseteq \R^n$ be a product of open intervals. A sequence $f_1, \ldots, f_k: U \to \R$ of analytic functions is called a \textit{Pfaffian chain} if there exist polynomials $P_{i,j} \in \R[X_1, \ldots, X_n, Y_1, \ldots, Y_i]$, for $i = 1, \ldots, k$ and $j = 1, \ldots, n$ such that
		$$
		\frac{\partial f_i}{\partial x_j}(x) = P_{i,j}(x, f_1(x), \ldots, f_i(x)),
		$$
		for all $i,j$ and $x \in U$. We say that a function $f$ is \textit{Pfaffian} with chain $f_1, \ldots, f_k$ if $f(x) = P(x, f_1(x), \ldots, f_k(x))$ for all $x \in U$, for some $P \in \R[X_1, \ldots, X_n, Y_1, \ldots, Y_n]$. The \textit{Pfaffian format} of a $f$ is defined as $n + k$ and its \textit{Pfaffian degree} is defined as $\sum_{i,j} \deg(P_{i,j}) + \deg(P)$.
	\end{definition}
	
	\noindent Polynomials, $p \in \R[X_1, \ldots, X_n]$, are naturally Pfaffian functions with format $n$ and degree $\deg(p)$. For any suitable $U\subset \R$, The function $\exp|_U: \R \to \R$ is Pfaffian with chain $\exp|_U$ and therefore has Pfaffian format $1$ and Pfaffian degree $2$. 
	
	\begin{definition}\label{def:semi_pfaffian_set}
		A \textit{semi-Pfaffian set} is a set $X \subseteq \R^n$, for $n \in \N$, such that the $X$ is defined by a Boolean combination of formulas of the form $f(x) = 0$ and inequalities of the form $h(x) > 0$ where all functions $f,h: U \to \R$ are Pfaffian and for some common $U \subset \R^n$ where $U$ is as in Definition \ref{def:pfaffian_function}. The \textit{Pfaffian format} of $X$ is defined as the maximum of the formats of the functions in its definition and its \textit{Pfaffian degree} is defined as the sum of their degrees. 
	\end{definition}
	
	\begin{definition}\label{def:sub_pfaffian_set}
		A set $Y \subseteq \R^n$ is called \textit{sub-Pfaffian} if there exists some semi-Pfaffian set $X \subseteq \R^{n + m}$ for some $m \in \N$ and projection map $\pi:\R^{n+m} \to \R^n$ such that $Y = \pi(X)$. The Pfaffian format and Pfaffian degree of $Y$ are those of $X$. 
	\end{definition}
	
	\noindent Let $f:U \to \R$ be a Pfaffian function. For any open box $B$ such that $\overline{B} \subseteq U$ we call $f|_B$ a \textit{restricted} Pfaffian function. We say that $f|_B$ has the same Pfaffian format and degree as $f$. We then define restricted semi-Pfaffian and restricted sub-Pfaffian sets as in Definitions \ref{def:semi_pfaffian_set} and \ref{def:sub_pfaffian_set} but using restricted Pfaffian functions in place of Pfaffian functions. 
	
	\begin{definition}[\cite{effective_subpfaffian_cell_decomposition} Definition 3]
		Let $Y \subseteq \R^n$ be a restricted sub-Pfaffian set. We say that $Y$ has \textit{*-format} $\mathcal{F}$ and \textit{*-degree} $D$ if there are finitely many restricted semi-Pfaffian sets $X_i \subseteq \R^{k_i}$ and connected components $X_i^\prime$ of the $X_i$ such that
		$$
		Y = \bigcup_i \pi_i(X^\prime_i),
		$$
		where $\pi_i: \R^{k_i} \to \R^n$ is the coordinate projection and the semi-Pfaffian sets $X_i$ have Pfaffian formats with maximum, $\mathcal{F}$, and Pfaffian degrees with sum $D$.
	\end{definition}
	
	\noindent In \cite{wilkies_conjecture}, Binyamini, Novikov and Zack show, using work of Binyamini and Vorobjov in \cite{effective_subpfaffian_cell_decomposition}, that the structure $\R_{\text{rPfaff}}$ of restricted sub-Pfaffian sets equipped with the FD-filtration $\Omega^*$ given by $X \in \Omega^*_{\mathcal{F},D}$ if and only if $X$ has *-format, $\mathcal{F}$, and *-degree $D$ is weakly \#o-minimal (or \#o-minimal in the terminology of \cite{wilkies_conjecture}) with \#cell decomposition. It follows from Theorem 1.9 in \cite{sharp_o_minimality} (Theorem \ref{thm:presharp_to_sharp} in this paper) that $\Omega^*$ can be extended to an FD-filtration, $\Omega$, such that $(\R_{\text{rPfaff}}, \Omega)$ is \#o-minimal with \#-cell decomposition in the updated language of \cite{sharp_o_minimality}. Furthermore, we have by Remark 10 in \cite{effective_subpfaffian_cell_decomposition} that a sub-Pfaffian set of format $\mathcal{F}$ and degree $D$ has *-format $\mathcal{F}$ and *-degree $\poly_\mathcal{F}(D)$.
	
	\subsection{Proof of Theorem \ref{thm:Rexp_lattice_count}}
	\begin{proof}
		Suppose that $\varphi$ is an $L_{\exp}$-formula of the form $\exists y_1 \ldots \exists y_n \psi(x, y)$ such that $\psi(x,y)$ is quantifier free and let $Y = \psi(\R)$. Then there is some projection map, $\pi$, such that $Z = \pi(Y)$.
		\\\\
		Let $\psi|_{[0,1]}$ denote the formula obtained from $\psi$ by replacing every instance of $\exp$ in $\psi$ with $\exp|_{[0,1]}$. Then $Y_0 = \psi(\R)$ and $Z_0 =\pi(Y_0)$ are definable in $\R_{\text{rPfaff}}$ and have Pfaffian format $\mathcal{F}$
		and degree $D$.        
		\\\\
		Let $M \in \N$. Then the set $Y_M := Y \cap [-M,M]^{n+m}$ can be obtained by replacing each instance of $\exp|_{[0,1]}$ in $\psi|_{[0,1]}$ with an appropriate restriction of $\exp$ to a compact subset of $\R$ has Pfaffian format $\mathcal{F}$ and degree $D$ since each restriction of $\exp$ has the same Pfaffian format and degree.
		\\\\
		In order to apply Theorem \ref{thm:sharp_lattice_count} we need a \#o-minimal structure with $\#$-cell decomposition, thus we must move from Pfaffian format and degree to *-format and *-degree. There is some $D^\prime = \poly_\mathcal{F}(D)$ such that each $Y_M$ has *-format $\mathcal{F}$ and *-degree $D^\prime$. Therefore $Z_M = \pi(Y_M)$ has *-format $\mathcal{F}$ and *-degree $D^\prime$.
		\\\\
		Applying Theorem \ref{thm:sharp_lattice_count} to each $Z_M$ in the \#o-minimal structure $(\R_{\text{rPfaff}}, \Omega)$ extending $(\R_{\text{rPfaff}}, \Omega^*)$, we get that there exists some $c = \poly_{F}(D^\prime) = \poly_\mathcal{F}(D)$, which does not depend on $M$ such that for all lattices $\Lambda$ with basis $\{\lambda_1, \ldots, \lambda_n\}$,
		$$
		\left| \left|(Z_M)_T \cap \Lambda\right| - \frac{\Vol((Z_M)_T)}{\det(\Lambda)} \right| \leq c \sum^{n-1}_{j=0} \frac{V_j((Z_M)_T)}{\lambda_1, \ldots, \lambda_j}.
		$$
		Now, since each fibre $Z_T$ is bounded, for each $T \in \R^m$ there exists some $M \in \N$ such that $Z_T = (Z_M)_T$.
	\end{proof}
	
	\section{A Note on Effective o-Minimality}\label{sec: eff omin}
	We conclude with a brief note on an alternative framework that yields an effective version of Theorem \ref{thm:BW_lattice_counting}. In a recent paper \cite{effective_o-minimality}, Binyamini introduces a new formalisation of effective o-minimality. In the vein of \#o-minimality, effective o-minimality uses a filtration on definable subsets of $\R$ with bounds on their complexity growth under boolean combination and projection. The growth constraints, however, are weaker than those for sharply o-minimal structures. In \cite{effective_pila_wilkie_pfaffian} Binyamini, Jones, Schmidt and Thomas give an effective version of Pila-Wilkie for sets definable using Pfaffian functions.
	
	\begin{definition}
		An \textit{effectively} o-minimal structure is an o-minimal structure on $\R$ expanding the real field, $\mathcal{R}$, with a filtration $\Omega_\mathcal{F}$ on the definable subsets of $\R^n$ for $n \in \N$ and a constant $c = \bigO_\mathcal{F}(1)$ such that:
		\begin{description}[labelindent=1cm]
			\item[E1] For all $\mathcal{F} \in \N$, we have $\Omega_\mathcal{F} \subseteq \Omega_{\mathcal{F} + 1}$ and all definable sets in $\R$ lie in some $\Omega_\mathcal{F}$ for some $\mathcal{F} \in \N$.
			\item[E2] For every $A,B \in \R^n$ such that $A, B \in \Omega_{\mathcal{F}}$, we have
			$$ A \cup B, A \cap B, \R^n \setminus A, A \times B, \pi^n_k(A) \in \Omega_{\mathcal{F} + 1},$$
			where $\pi^n_k: \R^n \to \R^k$ is the projection map to the first $k$ coordinates.
			\item[E3] If $A \subseteq \R$ and $A \in \Omega_{\mathcal{F}}$ then $A$ has at most $c$ connected components.
		\end{description}
		
		\noindent Binyamini makes explicit the link between effective o-minimality and \#o-minimality in \cite{effective_o-minimality}. Let $(\mathcal{R}, \Omega)$ be a \#o-minimal expansion of the real field. We can (up to minor reindexing) obtain an effectively o-minimal structure on $\mathcal{R}$ by considering the filtration $\Omega^\prime$ such that for all $X \in \Omega_{\mathcal{F}, D}$ we have $X \in \Omega^\prime_{\mathcal{F}}$. This axiom scheme yields effective versions of many of the standard theorems in o-minimality. Of interest to us are effective cell-decomposition and effective definable choice \cite[Section 1.5]{effective_o-minimality}.
		\begin{theorem}
			Let $(\mathcal{R}, \Omega)$ be an effectively o-minimal structure and let $X_1, \ldots,\\ X_k \subseteq \R^n$ be definable sets such that $X_1, \ldots, X_k \in \Omega_{\mathcal{F}}$. Then there exists a cylindrical decomposition of $\R^n$ compatible with $X_1, \ldots, X_n$ such that the number of cells and format of each cell are bounded by some constant $c_\mathcal{F} = \bigO_\mathcal{F}(1)$.
		\end{theorem}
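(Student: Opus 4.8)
The plan is to prove this by induction on the ambient dimension $n$, running the classical cylindrical cell decomposition argument (as in van den Dries) and checking that each of the finitely many steps inflates the relevant complexity by at most a $\bigO_\mathcal{F}(1)$ factor; here the implied constants are allowed to depend on $n$ as well, which is harmless since in the structures of interest (those arising from \#o-minimal ones by forgetting the degree) one has $n\le\mathcal{F}$. First I would treat the base case $n=1$: each $X_i\subseteq\R$ lies in $\Omega_\mathcal{F}$, so by axiom \textbf{E3} it is a union of at most $\bigO_\mathcal{F}(1)$ points and open intervals, and the finite set of all their endpoints induces a cylindrical decomposition of $\R$ compatible with $X_1,\dots,X_k$, each cell being a point or an interval cut out by at most two of these boundary points and hence of format $\bigO_\mathcal{F}(1)$.

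For the inductive step, write $\R^n=\R^{n-1}\times\R$. The key input is \emph{effective uniform finiteness}: the fibres $(X_i)_{\bar x}$, $\bar x\in\R^{n-1}$, have a number of connected components bounded uniformly by $\bigO_\mathcal{F}(1)$. Combined with effective definable choice (see \cite[Section 1.5]{effective_o-minimality}), this lets one define the branch functions $\xi_{i,1}\le\cdots\le\xi_{i,r_i}$ whose graphs trace the frontier curves of the fibres of $X_i$; by \textbf{E2} (boolean operations and projections each raise the format by one, and only boundedly many are used) each $\xi_{i,l}$ is definable of format $\bigO_\mathcal{F}(1)$. I would then pass to $\R^{n-1}$ the finitely many definable sets recording where each $\xi_{i,l}$ is defined and continuous and where $\xi_{i,l}<\xi_{i',l'}$ or $\xi_{i,l}=\xi_{i',l'}$; these have format $\bigO_\mathcal{F}(1)$, so the inductive hypothesis yields a compatible cylindrical decomposition of $\R^{n-1}$ into cells of format $\bigO_\mathcal{F}(1)$, and over each such cell $C$ the branch functions are continuous and linearly ordered, so $C\times\R$ splits into the graphs of the $\xi_{i,l}|_C$ and the open bands between consecutive ones. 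By construction this refines all the $X_i$ and each resulting cell has format $\bigO_\mathcal{F}(1)$.

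The hard part will be establishing effective uniform finiteness from the purely one-dimensional axiom \textbf{E3}, which a priori controls only connected components of subsets of $\R$. The standard remedy is to prove cylindrical decomposition and uniform finiteness together by a single simultaneous induction on $n$: the ``bad locus'' in $\R^{n-1}$ over which a fibre has more components than the generic value is built from the $X_i$ by boundedly many applications of the operations in \textbf{E2}, hence has format $\bigO_\mathcal{F}(1)$, and \textbf{E3} then caps its one-dimensional sections. A related point to be careful about is the dependence on the number $k$ of input sets: each projection and boolean step multiplies the counts, so the honest bound on the \emph{number} of cells is $\poly_\mathcal{F}(k)$ (with exponent depending on $n$), not $\bigO_\mathcal{F}(1)$; it is the \emph{format} of each individual cell that is genuinely $\bigO_\mathcal{F}(1)$, and the statement as written is to be read with $k$ regarded as a fixed parameter.
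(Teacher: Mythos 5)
The paper does not actually prove this theorem — it is imported from Binyamini \cite[Section 1.5]{effective_o-minimality} as background for the closing discussion — so the only question is whether your blind argument stands on its own. Your overall strategy (rerun the van den Dries cylindrical decomposition induction on $n$, use \textbf{E3} for the base case, and check that each of the boundedly many constructions costs only $\bigO_{\mathcal{F}}(1)$ in format via \textbf{E2}) is exactly the route the cited source takes, and your caveat that the cell \emph{count} must be allowed to depend on $k$ is a fair reading of the statement.

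There is, however, a genuine gap at the step you yourself flag as the crux: effective uniform finiteness. Your proposed justification --- that the bad locus over which a fibre has more than the generic number of components has format $\bigO_{\mathcal{F}}(1)$, after which \textbf{E3} ``caps its one-dimensional sections'' --- does not work as stated, for two reasons. First, the locus $\{\bar x \in \R^{n-1} : (X_i)_{\bar x} \text{ has at least } l \text{ components}\}$ is defined by quantifying over $l$ points, so its format grows with $l$; asserting that it has format $\bigO_{\mathcal{F}}(1)$ uniformly in $l$ presupposes the very bound $l \leq \bigO_{\mathcal{F}}(1)$ you are trying to establish. Second, \textbf{E3} controls connected components of definable subsets of $\R$ only; the bad locus lives in $\R^{n-1}$, and bounding the components of its one-dimensional sections says nothing about the fibre cardinalities of the $X_i$. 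The honest argument is the effectivization of van den Dries's uniform finiteness lemma $(I_n)$, which is derived from cell decomposition and piecewise monotonicity in dimension $n-1$ by the analysis of ``normal'' points; all auxiliary sets appearing there are built by a bounded number of \textbf{E2} operations, so the format bookkeeping does go through, but this is a genuine induction step with its own proof, not a consequence of \textbf{E3} applied to a single auxiliary set. With that step supplied (as in \cite{effective_o-minimality}), the rest of your sketch is sound.
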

		
		\begin{theorem}
			Let $(\mathcal{R}, \Omega)$ be an effectively o-minimal structure and let $Z \subseteq \R^{m+n}$ be a definable family such that $Z \in \Omega_{\mathcal{F}}$ and for all $T \in \R^m$ the fibre $Z_T$ is non-empty. Let $\pi_m: \R^{m+n} \to \R^m$ denote the projection map to the first $m$-coordinates. Then there exists some definable function $f: \pi_m(Z) \to \R^n$ such that for all $T \in \pi_m(Z)$, the image $f(T)$ is in $Z_T$ and $\Gamma_f \in \Omega_{\bigO_\mathcal{F}(1)}$.
		\end{theorem}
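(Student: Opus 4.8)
The plan is to run the classical derivation of definable choice from cell decomposition, by induction on $n$, while tracking the format at every step using the effective cell decomposition theorem above and axiom \textbf{E2}. First I would dispose of the base case $n=1$: apply effective cell decomposition to $Z\subseteq\R^{m+1}$, compatible with $Z$ itself, to get a cylindrical decomposition of $\R^{m+1}$ into $\bigO_\mathcal{F}(1)$ cells, each of format $\bigO_\mathcal{F}(1)$, with each cell either contained in $Z$ or disjoint from it. Enumerate the cells $C_1,\dots,C_N$ that lie inside $Z$; by cylindricity each $C_i$ is, over its base cell $\pi_m(C_i)\subseteq\R^m$, either the graph of a continuous definable function or a band between two continuous definable functions with values in $\R\cup\{\pm\infty\}$. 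Since $\bigcup_i\pi_m(C_i)=\pi_m(Z)$, for $T\in\pi_m(Z)$ I let $i(T)$ be the least index with $T\in\pi_m(C_i)$ and define $f(T)$ as the value of the graph function in the graph case, the midpoint of the fibre in a bounded band, the relevant endpoint shifted by $1$ in a one-sidedly infinite band, and $0$ when the band is all of $\R$; then $f(T)\in(C_{i(T)})_T\subseteq Z_T$. The graph $\Gamma_f$ is a union of $\bigO_\mathcal{F}(1)$ pieces, each built from the cells $C_i$ (and the boundary functions extracted from them, which are again definable of format $\bigO_\mathcal{F}(1)$) by finitely many boolean operations and one projection, so $\Gamma_f\in\Omega_{\bigO_\mathcal{F}(1)}$ by \textbf{E2}.

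For the inductive step I would peel off the last coordinate. Given $Z\subseteq\R^{m+n}$ with non-empty fibres over $\R^m$, let $\pi'\colon\R^{m+n}\to\R^{m+n-1}$ forget the last coordinate and set $Z_1=\pi'(Z)\in\Omega_{\bigO_\mathcal{F}(1)}$; viewing $Z_1\subseteq\R^{m+(n-1)}$ as a family over $\R^m$, its fibres are non-empty over $\pi_m(Z_1)=\pi_m(Z)$, so the induction hypothesis yields a definable $f_1\colon\pi_m(Z)\to\R^{n-1}$ with $(T,f_1(T))\in Z_1$ and $\Gamma_{f_1}\in\Omega_{\bigO_\mathcal{F}(1)}$. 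I then form $Z_2=\{(T,t)\in\R^{m+1}: (T,f_1(T),t)\in Z\}$, which lies in $\Omega_{\bigO_\mathcal{F}(1)}$ (built from $Z$ and $\Gamma_{f_1}$ by boolean operations and a projection) and has non-empty fibres over $\pi_m(Z)$; the base case provides $f_2\colon\pi_m(Z_2)\to\R$ with $(T,f_2(T))\in Z_2$ and $\Gamma_{f_2}\in\Omega_{\bigO_\mathcal{F}(1)}$. Setting $f(T)=(f_1(T),f_2(T))$ for $T\in\pi_m(Z)$ gives $(T,f_1(T),f_2(T))\in Z$, i.e.\ $f(T)\in Z_T$, and $\Gamma_f$ is obtained from $\Gamma_{f_1}$ and $\Gamma_{f_2}$ by boolean operations and a projection, hence $\Gamma_f\in\Omega_{\bigO_\mathcal{F}(1)}$ by \textbf{E2}.

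The geometric content here is entirely standard; the only delicate point is the accumulation of format across the $n$ levels of the recursion, since each level inflates the format by an amount depending only on $\mathcal{F}$ (through effective cell decomposition and \textbf{E2}). I expect this bookkeeping to be the main obstacle: to conclude a clean $\bigO_\mathcal{F}(1)$ bound one uses, exactly as in the statement of the effective cell decomposition theorem above, that the ambient dimension of a set in $\Omega_\mathcal{F}$ is controlled by $\mathcal{F}$ (or, failing that, one records an implied dependence on $n$), so that the depth-$n$ recursion still terminates in format $\bigO_\mathcal{F}(1)$. This argument parallels the sharply o-minimal definable choice of Proposition \ref{prop:sharp_definable_choice}, with the effective cell decomposition theorem playing the role there played by \#cell-decomposition.
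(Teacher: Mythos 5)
The paper does not actually prove this statement: it is quoted as one of the effective analogues of standard o-minimality results established in Binyamini's effective o-minimality paper (cited there as Section 1.5), so there is no in-paper argument to compare against. Your proposal supplies the standard derivation of definable choice from cell decomposition, with format bookkeeping, and it is essentially sound: the base case via a cylindrical decomposition compatible with $Z$ (graph value, midpoint, shifted endpoint, or $0$ on each cell contained in $Z$, selected by least index over each base cell) and the inductive step that projects off the last coordinate, applies the hypothesis to $Z_1=\pi'(Z)$, and then solves the one-dimensional problem for $Z_2=\{(T,t):(T,f_1(T),t)\in Z\}$ is exactly how this is done, and it parallels Proposition \ref{prop:sharp_definable_choice} in the sharp setting. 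Two caveats, both stemming from the fact that the axioms \textbf{E1}--\textbf{E3} as reproduced in this section are leaner than Binyamini's actual definition. First, your constructions repeatedly permute coordinates and intersect with diagonals (e.g.\ forming $Z_2$ from $\Gamma_{f_1}\times\R$ and $Z$, and reassembling $\Gamma_f$ from $\Gamma_{f_1}$ and $\Gamma_{f_2}$); \textbf{E2} only provides projections to initial coordinates and boolean operations, so you are implicitly using that semialgebraic sets (diagonals, graphs of coordinate permutations) carry bounded format, the analogue of \textbf{S7} --- true in the intended formalisation but worth stating. Second, as you yourself flag, the depth-$n$ recursion gives a clean $\bigO_\mathcal{F}(1)$ only if the ambient dimension is controlled by the format (the analogue of \textbf{S2}); absent that, the constant acquires a dependence on $n$. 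With those two points made explicit your argument is complete.
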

		
		\noindent One can obtain analogues of Lemmas \ref{lem:sharp_family_closure}, \ref{lem:uniform_davenport_constant}, \ref{lem:sharp_bound_on_Hj} and \ref{lem:auxilliary_set} as well as Proposition \ref{prop:bound_V_prime} by replacing each instance of $\Omega_{\bigO_\mathcal{F}(1),\bigO_\mathcal{F}(1)}$ and $\Omega_{\bigO_\mathcal{F}(1),\poly_\mathcal{F}(D)}$ with $\Omega_{\bigO_\mathcal{F}(1)}$ and every instance of $\poly_{\mathcal{F}}(D)$ with $\bigO_{\mathcal{F}}(1)$. It is easy to see that all the necessary auxiliary sets in those proofs can be shown to have format $\bigO_{\mathcal{F}}(1)$ (where $Z$ has format $\mathcal{F}$) by applying the axioms of effective o-minimality as you would with the analogous axioms of \#o-minimality. The following analogous version of Theorem \ref{thm:sharp_lattice_count} follows from there.
		
		\begin{theorem}
			Let $Z \subseteq \R^{m+n}$, be a definable family in some effectively o-minimal structure such that $Z \in \Omega_{\mathcal{F}}$ for some $\mathcal{F} \in \N$ and the fibre $Z_T \subseteq \R^n$ is bounded for each $T \in \R^m$. Then there exists some constant $c = \bigO_{\mathcal{F}}(1)$ such that for all $T \in \R^m$
			$$
			\left| \left|Z_T \cap \Lambda\right| - \frac{\Vol(Z_T)}{\det(\Lambda)} \right| \leq c \sum^{n-1}_{j=0} \frac{V_j(Z_T)}{\lambda_1 \ldots \lambda_j},
			$$
			where $V_j(Z)$ is the sum of volumes of the $j$-dimensional orthogonal projections of $Z$ onto the coordinate spaces obtained by setting $n-j$ coordinates to zero and $V_0(Z)$ is taken to be $1$ and $\lambda_1, \ldots, \lambda_n$ are the successive minima of $\Lambda$ with respect to a zero centred unit ball.   
		\end{theorem}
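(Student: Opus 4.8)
The plan is to run the proof of Theorem~\ref{thm:sharp_lattice_count} essentially verbatim, replacing each appeal to a sharply o-minimal tool by its effectively o-minimal counterpart. As noted just above, the effective analogues of Lemma~\ref{lem:sharp_family_closure}, Lemma~\ref{lem:uniform_davenport_constant}, Lemma~\ref{lem:sharp_bound_on_Hj}, Lemma~\ref{lem:auxilliary_set} and Proposition~\ref{prop:bound_V_prime} all hold: in each of those proofs the auxiliary sets are obtained from $Z$ by finitely many applications of products, intersections, complements, coordinate projections, and intersection with semi-algebraic sets of bounded complexity, so axiom \textbf{E2} keeps every such set in $\Omega_{\bigO_\mathcal{F}(1)}$; and the uniform constants produced in those proofs (the Davenport constant $M$, the rectifiability constant $E$, the projection constant $K$) are read off from the number and format of cells in a cell decomposition, hence are $\bigO_\mathcal{F}(1)$ by the effective cell-decomposition theorem. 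Effective definable choice plays the role of Proposition~\ref{prop:sharp_definable_choice} in the construction of the auxiliary set $Z^\prime$.

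Granting these, one argues exactly as in Section~\ref{sec: sharp BW}. Put $C=\{(T,x)\in\R^{m+n}:x\in\cl(Z_T)\}$ and $P=\{(T,x)\in\R^{m+n}:x\in\partial(Z_T)\}$; by the effective analogue of Lemma~\ref{lem:sharp_family_closure} both lie in $\Omega_{\bigO_\mathcal{F}(1)}$. Applying the effective Davenport constant $M=\bigO_\mathcal{F}(1)$ to $C_T$ together with an endomorphism $\Psi$ sending a short basis of $\Lambda$ to the standard basis, and then chaining Lemma~\ref{lem:endomorphism_image_bound}, Lemma~\ref{lem:orthogonal_projection_bound} and the effective analogue of Proposition~\ref{prop:bound_V_prime} (with $K=\bigO_\mathcal{F}(1)$), one obtains
\[
\left|\,\left|\cl(Z_T)\cap\Lambda\right|-\frac{\Vol(Z_T)}{\det(\Lambda)}\,\right|\ \leq\ c_C\sum_{j=0}^{n-1}\frac{V_j(Z_T)}{\lambda_1\cdots\lambda_j},
\]
where $c_C=\bigO_\mathcal{F}(1)$ absorbs $M$, $K$, the binomial coefficients $\binom{n}{j}$ and the dimensional constants $B_j$ and $\big(j^{3/2}n!2^n/B_n\big)^j$, and where we have used $\Vol(\cl(Z_T))=\Vol(Z_T)$ and $V_j(\cl(Z_T))=V_j(Z_T)$ from Lemmas~\ref{lem:volume_of_boundary} and~\ref{lem:Vj_of_closure}. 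The identical computation applied to $P$, together with $\Vol(\partial(Z_T))=0$ and $V_j(\partial(Z_T))\leq V_j(\cl(Z_T))=V_j(Z_T)$, produces $c_P=\bigO_\mathcal{F}(1)$ with $\left|\partial(Z_T)\cap\Lambda\right|\leq c_P\sum_{j=0}^{n-1}V_j(Z_T)/(\lambda_1\cdots\lambda_j)$. Since $\left|\,\left|Z_T\cap\Lambda\right|-\left|\cl(Z_T)\cap\Lambda\right|\,\right|\leq\left|\partial(Z_T)\cap\Lambda\right|$, the triangle inequality gives the stated bound with $c=c_C+c_P=\bigO_\mathcal{F}(1)$.

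I expect no genuine obstacle here: the only thing worth verifying is that the downgrade from \#o-minimality to effective o-minimality does not break any of the auxiliary lemmas, i.e.\ that nowhere in Section~\ref{sec: sharp BW} is the polynomial-in-$D$ bound used for anything beyond bookkeeping of the final constant. Inspecting the proofs, the degree is tracked solely to certify that the relevant uniform constants are $\poly_\mathcal{F}(D)$; replacing that certification by the effective cell-decomposition bound $\bigO_\mathcal{F}(1)$ leaves every step intact, and effective definable choice supplies the one non-Boolean construction used. (If one insists that the constant depend on $\mathcal{F}$ alone rather than separately on $n$, it suffices to observe that $n$ is determined by $Z$ and, after harmless padding of the format, may be taken to satisfy $n\leq\mathcal{F}$, just as axiom \textbf{S2} is used in the sharp case.)
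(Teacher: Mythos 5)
Your proposal is correct and follows essentially the same route as the paper: the paper itself proves this theorem by observing that the effective analogues of Lemmas~\ref{lem:sharp_family_closure}, \ref{lem:uniform_davenport_constant}, \ref{lem:sharp_bound_on_Hj}, \ref{lem:auxilliary_set} and Proposition~\ref{prop:bound_V_prime} hold with every $\poly_\mathcal{F}(D)$ replaced by $\bigO_\mathcal{F}(1)$, using effective cell decomposition and effective definable choice, and then rerunning the proof of Theorem~\ref{thm:sharp_lattice_count}. Your write-up simply spells out this substitution in more detail than the paper does.
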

	\end{definition}
	
	\begin{ack}
		The first author is grateful to the University of Manchester's Faculty of Science and Engineering for support. The second author is grateful to the Heilbronn Institute for Mathematical Research for support. The authors would also like to thank Gareth Jones for his various helpful comments on this work.
	\end{ack}

	\printbibliography
\end{document}